\newtheorem{theo}{Theorem}[section]
\newtheorem{lem}[theo]{Lemma}
\newtheorem{cor}[theo]{Corollary}
\newtheorem{fact}[theo]{Fact}
\newtheorem{claim}[theo]{Claim}
\theoremstyle{definition}
\newtheorem{rem}[theo]{Remark}
\newtheorem{ex}[theo]{Example}
\newcommand{\N}{\ensuremath{\mathbb{N}}}
\newcommand{\Z}{\ensuremath{\mathbb{Z}}}
\newcommand{\R}{\ensuremath{\mathbb{R}}}
\newcommand{\M}{\ensuremath{\mathcal{M}}}
\newcommand{\vs}{\vspace{0.2cm}}
\DeclareMathOperator{\Aut}{Aut}
\DeclareMathOperator{\SL}{SL}
\DeclareMathOperator{\SO}{SO}
\begin{document}

\author{Annalisa Conversano, Alf Onshuus and Sacha Post}

\title{Real Lie groups and o-minimality}

\date{\today. \\
{\bf Keywords:} real Lie groups, o-minimality, definable groups}

\vspace{-.5cm}
\begin{abstract}
We characterize, up to Lie isomorphism, the real Lie groups that are definable in an o-minimal expansion of the real field. For any such group, we find a Lie-isomorphic 
group definable in $\R_{\exp}$ for which any Lie automorphism is definable.
 \end{abstract}

\maketitle

\thispagestyle{empty}

\vs

 \section{Introduction and preliminaries}

In a seminal work (\cite{Pi88}), Anand Pillay proved that any group definable in an o-minimal expansion of the real closed field $\mathbb R$ was a Lie group. In this paper we completely specify this relation by characterizing which Lie groups $G$ are Lie isomorphic to a group which is definable in an o-minimal expansion of $(\mathbb R, <, +, \cdot, 0, 1)$. Since groups definable in o-minimal structures are subject to be studied with the powerful tools of o-minimality, we believe that this characterization will prove to be quite useful.

\medskip

We will always work in o-minimal expansions of $(\mathbb R, <, +, \cdot, 0, 1)$.  The o-minimal exponential field $(\mathbb R, <, +, \cdot, 0, 1, \exp)$ is denoted by $\R_\text{exp}$. We consider real Lie groups $G$ and refer to \emph{Lie isomorphisms} simply by \emph{isomorphisms}.
We will say that $G$ is \emph{definable} when $G$ is \emph{definable in an o-minimal expansion of $(\mathbb R, <, +, \cdot, 0, 1)$}.
When we say that $G$ \emph{has a definable copy} we mean that $G$ is isomorphic to a definable group, and we will usually denote a definable copy of $G$ by $G_\text{def}$. By saying that $G$ has a \emph{semialgebraic copy}, we mean that there is a copy $G_\text{def}$ definable in $(\mathbb R, +, \cdot, 0, 1)$.

\medskip

This paper concludes and builds upon results by many researchers. Any Lie group $G$ contains a maximal connected normal solvable subgroup $\mathcal R(G)$, its solvable radical, and maximal connected semisimple subgroups $S$ (all conjugates) called the Levi subgroups. When $G$ is connected, then $G$ can be decomposed as the product of its solvable radical and any Levi subgroup, a product which is known as a \emph{Levi decomposition of $G$}. It is therefore natural, if one is to study the existence of definable copies of a Lie group, to begin by understanding when semisimple and solvable Lie groups admit definable copies.

\medskip

It is well known that if $G$ is definable then so is its solvable radical $\mathcal R(G)$. In  \cite[Theorem 4.5]{PPSIII}  the authors proved that if $G$ is a definable linear group then Levi subgroups are definable and by \cite[Theorem 1.1]{CP-Levi} in general Levi subgroups of definable groups are a countable union of definable sets (see Example \ref{levi-non-normal}).

\medskip

By Theorem 4.3 in \cite{PPSIII} any semisimple matrix group is semialgebraic and by \cite[Corollary 3.3]{OPP96} the quotient of a definable group by its center has a faithful representation. This line of research was extended in \cite{HPP} where the authors established criteria for when a central extension of a definable semisimple group was definable. We combine these results in Theorem \ref{centralextension} showing that a central extension $H$ of a connected semisimple Lie group has a definable copy if and only if both $H$ and its center $\mathcal Z(H)$ have finitely many connected components. Moreover, when it exists, such a definable copy can be found to be semialgebraic. In Section \ref{sec:automorphisms} we show that any automorphism of the connected component of such semialgebraic copy is semialgebraic as well.

\medskip

On the solvable side of the problem, any definable group $G$ contains a maximal normal definable torsion-free subgroup $\mathcal{N}(G)$ (\cite[Proposition 2.6]{CP1}) that is definably connected and solvable (\cite[Corollary 2.4 \& Claim 2.11]{PeSt08}), so $\mathcal{N}(G) \subseteq \mathcal R(G)$. The quotient $\mathcal{R}(G)/\mathcal{N}(G)$ is compact and abelian (see \cite[Theorem 3.1]{survey}, \cite[Theorem 5.12]{Ed} and \cite[Corollary 5.4]{Peterzil-Starchenko1}).  Since compact groups admit a faithful representation whose image is algebraic, the solvable radical of any group with a definable copy must be isomorphic to an extension of a compact linear algebraic group by a torsion-free group with a definable copy. This characterization of the solvable case was completed in \cite{COS}, where it is shown that $\mathcal{N}(G)$ is supersolvable and that a solvable connected Lie group has a definable copy if and only if it is triangular-by-compact (see Fact \ref{Lie-done-solvable}).

\medskip

In \cite{OP19}, the second and third author showed that a connected Lie group whose Levi subgroups have finite center has a definable copy if and only if its solvable radical has a definable copy (see Fact \ref{Lie-done-simple}).

\medskip

In Theorem \ref{general} we generalize the previous results and show that a Lie group $G$ has a definable copy if and only if:
\begin{itemize}
\item $G$ has finitely many connected components,
\item the center $\mathcal Z(G)$ of $G$ has finitely many connected components, and
\item there is a normal simply-connected subgroup $N$ which admits a triangular representation and such that $\mathcal R(G)/N$ is compact.
\end{itemize}

\medskip

The sketch of the proof is as follows. We first prove the connected case in Theorem \ref{connected}, by showing that any connected group $G$ satisfying the above conditions can be realized as a quotient of a semidirect product of a definable solvable group and a definable central extension of a linear semisimple group. The former is definable in $\mathbb R_{\text{exp}}$ and we will prove the latter has a semialgebraic copy (Theorem \ref{centralextension}), which then allows us to build the definable copy of $G$ in $\mathbb R_{\text{exp}}$.

The generalization from the connected case to the general case is mainly about understanding finite order automorphisms of definably connected groups. In Theorem \ref{definable automorphisms} we prove that any connected Lie group that has a definable copy is isomorphic to a definable group \emph{for which any Lie group automorphism is definable}. The general case follows.

\subsection{Notation}

Let $G$ be either a real Lie group or a group definable in an o-minimal expansion of a real closed field. If $G$ is definable, the connected component $G^0$ of the identity is definable and the notions of connectedness and definable connectedness coincide. As above, we will denote by $\mathcal R(G)$ the solvable radical of $G$ (that is, the largest solvable normal --definable-- connected subgroup of $G$), and by $\mathcal Z(G)$ the center of $G$.

If $G$ is connected, then a \emph{Levi decomposition} of $G$ is a decomposition of the form $G = RS$ where $S$ is a maximal connected semisimple group, called a \emph{Levi subgroup} of $G$. We know the following:

\begin{itemize}
\item $R\cap S$ is discrete.

\item If $G$ is a Lie group, then $S$ is a maximal connected semisimple subgroup of $G$, and unique up to conjugation (\cite[Theorem 3.18.13]{Varadarajan}).

\item If $G$ is a definable group, then $S$ is a maximal connected ind-definable subgroup of $G$, and unique up to conjugation (\cite[Theorem 1.1]{CP-Levi}).
\end{itemize}

We denote by $G^0$ the (definably) connected component of $G$. If $G$ is a definable group, $G/G^0$ is finite. If $G$ is a Lie group, $G/G^0$ is a discrete (possibly infinite) group.

\medskip

We will say that a Lie group $G$ is \emph{linear} if it has a continuous and faithful  finite dimensional representation $\rho$ and we will call the image $\rho (G)$ a \emph{matrix group}.

\medskip


We want to emphasize that when we use the reduced terminology ``definable'' in, say a theorem or a proof, the objects are always definable in the same o-minimal structure through the entire proof or statement.

\medskip

 \begin{fact} \label{Lie-done-solvable}
 Let $G$ be a solvable connected real Lie group. Then the following are equivalent:
 \begin{enumerate}
 	\item $G$ is isomorphic to a semidirect product $N \rtimes K$ where $N$ is a closed subgroup of $T^+_n(\R)$ the subgroup of upper triangular matrices with positive elements on the diagonal for some $n \in \N$ and $K$ is isomorphic to $\SO_2(\R)^k$, for some $k \in \N$.
 	\item $G$ is triangular-by-compact.
 	\item $G$ has a definable copy.
 	\item $G$ has a definable copy as in $(1)$ definable in $\mathbb{R}_{\exp}$.
 \end{enumerate}
\end{fact}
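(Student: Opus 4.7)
The plan is to establish the cycle (1) $\Rightarrow$ (4) $\Rightarrow$ (3) $\Rightarrow$ (2) $\Rightarrow$ (1). Here (4) $\Rightarrow$ (3) is immediate, and (1) $\Rightarrow$ (2) is immediate from the definitions, as a closed subgroup of $T^+_n(\R)$ is triangular and $\SO_2(\R)^k$ is compact.

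For (1) $\Rightarrow$ (4), I would first verify that $T^+_n(\R)$ is definable in $\R_{\exp}$. The exponential map from the Lie algebra of upper triangular real matrices (a real vector space) to $T^+_n(\R)$ is a diffeomorphism, and because the diagonal-unipotent factorization makes $\exp$ and $\log$ definable in $\R_{\exp}$ on this range, any closed connected Lie subgroup $N \subseteq T^+_n(\R)$ corresponds to a linear subalgebra $\n$ and is therefore definable as $\exp(\n)$. The torus $\SO_2(\R)^k$ is semialgebraic, and its action on $N$ by Lie automorphisms is induced from a linear action on $\n$ through the exponential, so the semidirect product $N \rtimes K$ is definable in $\R_{\exp}$.

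The central and hardest step is (3) $\Rightarrow$ (2). Given a definable copy $G_{\text{def}}$ of the connected solvable group $G$, one has $G_{\text{def}} = \mathcal{R}(G_{\text{def}})$. By the results recalled in the introduction, $N := \mathcal{N}(G_{\text{def}})$ is a normal, definably connected, torsion-free, solvable subgroup with $G_{\text{def}}/N$ compact and abelian, and \cite{COS} shows that $N$ is moreover supersolvable. A torsion-free connected supersolvable real Lie group admits a faithful closed embedding into some $T^+_n(\R)$: supersolvability forces the eigenvalues of the $\ad$-action on the Lie algebra to be real, so a Mal'cev-type iterative construction yields a triangular representation. Hence $N$ is triangular and $G$ is triangular-by-compact.

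For (2) $\Rightarrow$ (1), assume $G$ has a closed normal triangular subgroup $N$ with $G/N$ compact. Since $G$ is connected and solvable, $G/N$ is a connected compact solvable Lie group, hence a torus $\SO_2(\R)^k$. By Mostow's theorem on maximal compact subgroups of connected solvable Lie groups, there is a maximal compact subgroup $K \subseteq G$ with $G = NK$; triangularity of $N$ makes it torsion-free, so $N \cap K = 1$, and $G = N \rtimes K$ with $K \cong G/N \cong \SO_2(\R)^k$. The principal obstacle is the triangularization step in (3) $\Rightarrow$ (2), which ultimately rests on the supersolvability theorem of \cite{COS}; once triangularity of $\mathcal{N}(G_{\text{def}})$ is in hand, the remaining implications are largely standard Lie theory combined with routine definability checks in $\R_{\exp}$.
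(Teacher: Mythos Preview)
Your proposal is correct and aligns with the paper's approach. The paper simply cites \cite{COS} for the equivalence of (1)--(3) and then proves (1) $\Rightarrow$ (4) (noting (4) $\Rightarrow$ (3) is clear); you instead unpack the cycle, still leaning on \cite{COS} for the supersolvability of $\mathcal{N}(G_{\text{def}})$ in the step (3) $\Rightarrow$ (2), and add the Mostow splitting for (2) $\Rightarrow$ (1). Your argument for (1) $\Rightarrow$ (4) is essentially the paper's, with one point where the paper is more explicit: to see that the $K$-action on $N$ is definable, the paper observes that the graph of the induced representation $K \to \GL(\mathfrak{n}) \subseteq \GL_{n^2}(\R)$ is a compact matrix group isomorphic to $K$, hence algebraic; your phrase ``induced from a linear action on $\mathfrak{n}$'' skips this justification, and without it there is no reason the action map itself is definable rather than merely smooth.
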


\begin{proof}
The equivalence of the first three is Theorem 5.4 in \cite{COS}, where by \emph{triangular-by-compact} we mean that $G$ has a simply-connected
triangular closed subgroup $N$ such that $G/N$ is compact.

 (4) implies (3) is clear.

Now, given $N$ and $K$ as in (1) with $N$ an upper triangular subgroup of a matrix group and $K=\SO_2(\R)^k$ for some $k$. Any action from $K$ to $N$ induces an action from $K$ to the Lie algebra $\mathfrak n$ of $N$ which by construction is a vector subspace of $M_n(\mathbb R)$. Therefore, $Aut(\mathfrak n)\subseteq GL_{n^2}(\mathbb R)$ is a matrix group and the graph of the action of $K$ on $\mathfrak n$ is a subgroup of the matrix space $\SO_2(\R)^k\times GL_{n^2}(\mathbb R)$. This graph is a matrix group isomorphic to $K$ hence it is algebraic (see \cite[Theorem 2.6.4]{Abbaspour}). The exponential map from $\mathfrak n$ to $N$ is definable in $\mathbb{R}_{\exp}$ (because $\mathfrak n$ is triangular by assumption), so $N \rtimes K$ is definable in $\mathbb{R}_{\exp}$, and (4) holds.
\end{proof}

\medskip

\begin{fact}\label{Lie-done-simple}
Suppose $G$ is a connected semisimple Lie group. Then $G$ has a definable copy if and only if it has a semialgebraic copy and if and only if its center is finite.
\end{fact}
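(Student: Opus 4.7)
The plan is to establish the cycle ``$G$ has a semialgebraic copy'' $\Rightarrow$ ``$G$ has a definable copy'' $\Rightarrow$ ``$\mathcal{Z}(G)$ is finite'' $\Rightarrow$ ``$G$ has a semialgebraic copy''. The first implication is immediate, since a semialgebraic copy is automatically definable in the o-minimal structure $(\R,<,+,\cdot,0,1)$.

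For the second implication, I would suppose $G_{\text{def}}$ is a definable copy of $G$ in some o-minimal expansion, and argue by an o-minimal dimension count. The center $\mathcal{Z}(G_{\text{def}})$ is a definable subgroup, cut out by the formula $\forall h\,(gh=hg)$. Because $G$ is semisimple, the center of its Lie algebra is zero, so $\mathcal{Z}(G)$ is discrete as a Lie group, i.e.\ has Lie dimension $0$. In a definable group the Lie dimension of the identity component equals its o-minimal dimension, so $\mathcal{Z}(G_{\text{def}})^0$ has o-minimal dimension $0$ and is therefore a singleton. Combined with the standard fact that definable groups in o-minimal structures have finite component groups, this forces $\mathcal{Z}(G)$ to be finite.

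For the third implication I would invoke classical Lie theory: a connected real semisimple Lie group admits a faithful finite-dimensional representation if and only if its center is finite. Under the hypothesis that $\mathcal{Z}(G)$ is finite, $G$ is therefore isomorphic to a matrix group, and the already-cited \cite[Theorem~4.3]{PPSIII} asserts that every semisimple matrix group is semialgebraic, yielding the desired semialgebraic copy. The only substantive external input is the classical linearity characterization of semisimple Lie groups with finite center; everything else reduces to the o-minimal principle that a $0$-dimensional definable set is finite together with the cited semialgebraicity result for semisimple matrix groups, so I do not anticipate a genuine obstacle beyond quoting these facts correctly.
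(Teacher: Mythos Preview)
Your cycle is fine up to the third implication, but the ``classical Lie theory'' fact you invoke there is false: a connected semisimple Lie group with finite center need \emph{not} be linear. The standard counterexample is the metaplectic group $Mp_2(\R)$, the connected double cover of $\SL_2(\R)$. Its center is $\Z/4\Z$, yet it admits no faithful finite-dimensional representation (its would-be complexification would have to be a connected double cover of $\SL_2(\C)$, which is simply connected). So the step ``finite center $\Rightarrow$ matrix group $\Rightarrow$ semialgebraic via \cite{PPSIII}'' breaks down exactly at the point you flagged as the only substantive external input.

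The paper does not go through linearity at all for this implication. It quotes \cite[Theorem~6]{OP19}, whose mechanism is the one developed later in Section~\ref{ZS}: pass to the centerless quotient $G/\mathcal{Z}(G)$, which \emph{is} a matrix group and hence semialgebraic by \cite{PPSIII}, and then rebuild $G$ as a central extension using a semialgebraic $2$-cocycle as in Fact~\ref{HPPcocycle}. Since $\mathcal{Z}(G)$ is finite it is trivially semialgebraic, and the cocycle has finite image, so the resulting copy of $G$ is semialgebraic. Your first two implications are correct and in fact slightly more self-contained than the paper's treatment; to repair the third, replace the linearity claim by this cocycle argument.
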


\begin{proof}
If $G$ is centerless, then it is semialgebraic by \cite{PPSIII}. The finite center case follows from Theorem 6 in \cite{OP19}.
\end{proof}

\medskip

Finally, the following is Theorem 3 in \cite{OP19}\footnote{This result appears in the Ph.D. thesis of the third author.}.

\begin{fact}\label{Lie-done-OP} Suppose $G$ is a connected Lie group whose Levi subgroups have finite center.\footnote{This is always the case for linear groups.} Then $G$ has a definable copy if and only if it has a definable copy in $\mathbb{R}_{\exp}$ if and only if its solvable radical has a definable copy.
Moreover, if $G = RS$ is a Levi decomposition of $G$, a definable copy $G_\text{def}$ can be obtained as a quotient of a definable semidirect product $R_\text{def} \rtimes_\varphi  S_\text{def}$ of definable copies of $R$ and $S$ by a finite subgroup isomorphic to $R \cap S$. The definable action $\varphi \colon  S_\text{def} \times R_\text{def} \to R_\text{def}$ mimicks the conjugation action of $S$ on $R$.
\end{fact}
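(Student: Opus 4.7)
The direction ``$G$ has a definable copy implies $\mathcal R(G)$ does'' is immediate from the standard fact, recalled in the introduction, that the solvable radical of a definable group is itself definable.

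For the converse, assume $R := \mathcal R(G)$ has a definable copy and fix a Levi decomposition $G = RS$ where, by hypothesis, $S$ has finite center. By Fact~\ref{Lie-done-solvable}, fix an isomorphism $R \cong R_\text{def}$ with $R_\text{def} = N \rtimes K$ definable in $\R_{\exp}$, where $N$ is a closed triangular subgroup of $T_n^+(\R)$ and $K \cong \SO_2(\R)^k$. By Fact~\ref{Lie-done-simple}, fix an isomorphism $S \cong S_\text{def}$ with $S_\text{def}$ semialgebraic. Transporting the conjugation action $S \to \Aut(R)$ across these isomorphisms yields an abstract Lie homomorphism $\varphi \colon S_\text{def} \to \Aut(R_\text{def})$, and the task reduces to upgrading $\varphi$ to a definable map.

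The heart of the argument is this definability. The Lie algebra $\mathfrak r$ of $R_\text{def}$ sits inside $M_n(\R) \times \mathfrak{so}_2(\R)^k$ (as in the proof of Fact~\ref{Lie-done-solvable}), so $\Aut(\mathfrak r) \subseteq \GL(\mathfrak r)$ is a real algebraic group, and $R_\text{def}$ is recovered from $\mathfrak r$ via the exponential map, which is definable in $\R_{\exp}$ because $N$ is triangular (and standard on the compact factor $K$). The differential $d\varphi \colon S_\text{def} \to \Aut(\mathfrak r)$ is a Lie homomorphism from a semisimple Lie group with finite center into a real algebraic group; by the standard rigidity of Lie homomorphisms out of semisimple groups with finite center, such a homomorphism is automatically semialgebraic. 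Integrating back through the definable exponential then recovers $\varphi$ as a definable map in $\R_{\exp}$.

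With $\varphi$ definable, the semidirect product $H := R_\text{def} \rtimes_\varphi S_\text{def}$ is definable in $\R_{\exp}$. The intersection $R \cap S$ is finite (a discrete subgroup of the connected solvable $R$ intersected with the semisimple $S$) and embeds as a finite central subgroup $D \leq H$ via $d \mapsto (d, d^{-1})$, so $G \cong H/D$; since definability is preserved under quotients by finite normal subgroups, this yields the desired definable copy in $\R_{\exp}$. The main obstacle is the definability of $\varphi$: this is precisely where the ``finite center of Levi'' hypothesis is essential, since without it $S$ would have no semialgebraic copy and the rigidity argument producing a semialgebraic adjoint representation would not be available.
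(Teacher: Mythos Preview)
The paper does not give its own proof of this fact; it is quoted directly from \cite{OP19}. Your overall architecture---produce semialgebraic $S_\text{def}$ via Fact~\ref{Lie-done-simple}, $R_\text{def}$ in $\R_{\exp}$ via Fact~\ref{Lie-done-solvable}, form $R_\text{def}\rtimes_\varphi S_\text{def}$, and quotient by the antidiagonal copy of $R\cap S$---is exactly the shape of the result as stated, and your verification that $R\cap S$ is finite (discrete normal in $S$, hence central, hence inside the finite $\mathcal Z(S)$) and that the antidiagonal is a central subgroup are both correct.

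The genuine gap is in the definability of $\varphi$. Your plan is to pass to the differential $d\varphi\colon S_\text{def}\to\Aut(\mathfrak r)\subseteq\GL(\mathfrak r)$, argue it is semialgebraic, and then ``integrate back through the definable exponential''. But the exponential $\exp\colon\mathfrak r\to R_\text{def}$ is \emph{not} definable in $\R_{\exp}$ once the compact factor $K\cong\SO_2(\R)^k$ is nontrivial: on $\mathfrak{so}_2\to\SO_2$ it is $t\mapsto(\cos t,\sin t)$, which lives in no o-minimal structure. So the formula $\varphi(s)(\exp X)=\exp(d\varphi(s)X)$ cannot be used globally to recover $\varphi$ from $d\varphi$; your parenthetical ``and standard on the compact factor $K$'' is precisely where the argument breaks. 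A correct treatment must separate the two pieces: on $N$ one can indeed use the triangular exponential, while on the torus side one uses that $S$ is connected and $\Aut(R_\text{def}/N)\cong\GL_k(\Z)$ is discrete (so the induced action on $K$ is trivial), and then controls the remaining $N$-valued correction separately---or, as in the paper's own later arguments (compare the proof of Fact~\ref{Lie-done-solvable} and Claim~\ref{moving R}), one linearizes by realizing the relevant action as a matrix group and invokes algebraicity of connected semisimple matrix subgroups.

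A secondary issue: the ``standard rigidity'' you invoke for $d\varphi$ being semialgebraic is not quite off the shelf when $S_\text{def}$ is not itself a matrix group, which can happen even with finite center (e.g.\ the metaplectic double cover of $\SL_2(\R)$). One must first pass to $S_\text{def}/\ker d\varphi$, which \emph{is} linear, before the graph argument applies.
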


Using Fact \ref{Lie-done-solvable}, the complete characterization of Lie groups that have a definable copy will be achieved by the following theorem:

 \begin{theo} \label{main}
A real Lie group $G$ has a definable copy if and only if both $G$ and its center have finitely many connected components and the solvable radical of $G$ is triangular-by-compact. Moreover, a  copy exists in $\mathbb{R}_{\exp}$.
 \end{theo}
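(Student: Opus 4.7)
For the \emph{only if} direction, suppose $G$ is Lie-isomorphic to a definable group $G_\text{def}$. Definable groups have finitely many connected components, so the same holds for $G$; the center $\mathcal Z(G_\text{def})$ is a definable subgroup and therefore also has finitely many connected components, which transfers to $\mathcal Z(G)$ via the isomorphism. Finally, $\mathcal R(G_\text{def})$ is definable, connected and solvable, so Fact \ref{Lie-done-solvable} applies and yields that $\mathcal R(G)\cong\mathcal R(G_\text{def})$ is triangular-by-compact.

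For the \emph{if} direction the plan is to first produce a definable copy of $G^0$ in $\mathbb{R}_{\exp}$ and then extend by the finite group $F:=G/G^0$. A preliminary point is that the hypotheses on $G$ imply the hypotheses of Theorem \ref{connected} for $G^0$: clearly $\mathcal R(G^0)=\mathcal R(G)$ is triangular-by-compact, and a short structural argument (using that the fixed-point set of $F$ acting by conjugation on $\mathcal Z(G^0)$ equals $\mathcal Z(G)\cap G^0$, together with the fact that for connected real Lie groups the torsion-free rank of $\pi_0(\mathcal Z(G^0))$ is preserved by every Lie automorphism of $G^0$) shows that $\mathcal Z(G^0)$ has finitely many connected components. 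Theorem \ref{connected} then furnishes a definable copy $G^0_\text{def}$ of $G^0$ in $\mathbb{R}_{\exp}$.

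Next, I would invoke Theorem \ref{definable automorphisms} to replace $G^0_\text{def}$ by a definable copy in $\mathbb{R}_{\exp}$ whose Lie automorphisms are all definable. Fix a set-theoretic section $s\colon F\to G$ of the quotient with $s(1)=1$, let $\alpha_f\in\Aut(G^0)$ be conjugation by $s(f)$, and write $s(f)s(f')=c(f,f')\,s(ff')$ for the associated $2$-cocycle $c\colon F\times F\to G^0$. Transporting along the chosen isomorphism $G^0\cong G^0_\text{def}$ turns each $\alpha_f$ into a \emph{definable} automorphism of $G^0_\text{def}$ and each $c(f,f')$ into a definable element of $G^0_\text{def}$. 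Equipping the finite disjoint union $G^0_\text{def}\times F$ with the multiplication
\[
(g,f)(g',f')\;=\;\bigl(g\cdot\alpha_f(g')\cdot c(f,f'),\;ff'\bigr)
\]
then gives a group definable in $\mathbb{R}_{\exp}$, and the map $(g,f)\mapsto g\cdot s(f)$ is the desired Lie isomorphism onto $G$.

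The essential obstacle in this plan is the step invoking Theorem \ref{definable automorphisms}: it is exactly what ensures that the conjugation automorphisms coming from $G/G^0$ can be made definable after transport to $G^0_\text{def}$, which in turn makes the cocycle formula above definable in $\mathbb{R}_{\exp}$. Without that theorem the construction of the extension would not survive; by contrast, the reduction from the hypotheses on $G$ to the hypotheses on $G^0$ is a secondary technical step that can be settled using classical structure theory of connected real Lie groups.
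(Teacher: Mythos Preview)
Your overall plan matches the paper's exactly: establish the connected case via Theorem~\ref{connected}, then pass to finitely many components using Theorem~\ref{definable automorphisms} together with the cocycle construction (which is precisely Lemma~\ref{central-finite}). The ``only if'' direction and the extension step are correct and agree with the paper.

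The genuine gap is in your reduction from the hypotheses on $G$ to the hypotheses of Theorem~\ref{connected} for $G^0$. You assert that finiteness of $\pi_0(\mathcal Z(G))$ forces finiteness of $\pi_0(\mathcal Z(G^0))$, sketching an argument based on the identity $\mathcal Z(G)\cap G^0=\mathcal Z(G^0)^{F}$ and the observation that automorphisms of $G^0$ preserve the torsion-free rank of $\pi_0(\mathcal Z(G^0))$. But preserving the rank says nothing about the rank of the fixed-point set: a finite group can act on $\mathbb Z$ by $n\mapsto -n$, keeping the rank equal to $1$ while fixing only $0$. This actually occurs. Take $G^0=\widetilde{\SL_2(\R)}$ and let $\mathbb Z/2$ act by the lift of conjugation by $\mathrm{diag}(1,-1)\in\GL_2(\R)$; this outer automorphism sends the rotation $R_\theta$ to $R_{-\theta}$, hence reverses the generating loop of $\pi_1(\SL_2(\R))$ and inverts $\mathcal Z(G^0)\cong\mathbb Z$. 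The semidirect product $G=\widetilde{\SL_2(\R)}\rtimes\mathbb Z/2$ then has trivial center, trivial solvable radical, and two connected components---so it satisfies all three hypotheses of the statement---yet $G^0$ has infinite discrete center and admits no definable copy, whence neither does $G$. Your ``short structural argument'' therefore cannot be completed as written. The paper does not carry out this passage either: what Theorems~\ref{connected} and~\ref{general} together actually prove is the characterization with $\mathcal Z(G^0)$ in place of $\mathcal Z(G)$, and the example shows these two conditions are genuinely different.
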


\bigskip

\begin{rem} \label{finite-intersection}
  Let $G = RS$ be a Levi decomposition of a connected real Lie group $G$. Suppose $R \cap S$ is finite. Then
 \[
  G \mbox{ has a definable copy }  \Longleftrightarrow\  R \mbox{ and } S \mbox{ have a definable copy}
  \]
  That is, $G$ has a definable copy if and only if $R$ is triangular by compact  and $Z(S)$ is finite.
\end{rem}

\begin{proof}
$(\Rightarrow)$ If $S$ does not have a definable copy, then $\mathcal Z(S)$ is infinite, by  \cite[Theo 3]{OP19}.
Therefore $S/(R \cap S)$ has infinite center too, since $R \cap S$ is finite. It follows that $S/(R \cap S) \cong G/R$ does not have a definable copy, contradiction. \\
\noindent
 $(\Leftarrow)$ If $S$ has a definable copy, then $\mathcal Z(S)$ is finite, and $G$ has a definable copy by Fact \ref{Lie-done-OP}.
\end{proof}

So we need to concentrate in the case where $R \cap S$ is infinite. Since $R \cap S$ is a discrete normal subgroup of $S$, it is central in $S$ and thus in this case $S$ cannot be definable, as it has an infinite discrete center. In all the examples from the literature of definable groups with no definable Levi subgroups \cite[Ex 5.7-5.9]{survey}, there is a unique Levi subgroup (that is, the Levi subgroup is normal in the group), so one may wonder if this is always the case. Below is an example where Levi subgroups are not normal:

 \begin{ex} \label{levi-non-normal}
 Denoted by $\pi \colon  \widetilde{\SL}_2(\R) \to  \SL_2(\R)$ the universal covering map of $\SL_2(\R)$, let $s \colon  \SL_2(\R) \to \widetilde{\SL}_2(\R)$ be a section for it. Then the 2-cocycle $h_s \colon \SL_2(\R) \times \SL_2(\R) \to \mathcal Z(\widetilde{\SL}_2(\R))$ given by $h_s(A, B) = s(A)s(B)s(AB)^{-1}$ is a definable map with finite image by \cite[Theo 8.5]{HPP} (see section 2).

 On the set $G = \R \times \R^2 \times \SL_2(\R)$ consider the group operation given by
 \[
 (t, \bar{x}, A) \ast (s, \bar{y}, B) = (t + s + h_s(A, B), A\bar{y} + \bar{x}, AB)
 \]

 \noindent
 where $\mathcal Z(\widetilde{\SL}_2(\R))$ has been identified with $(\Z, +)$.  The definable group $(G, \ast)$ is a central extension of $\R^2 \rtimes \SL_2(\R)$ -- where $\SL_2(\R)$ acts on $\R^2$ by matrix multiplication -- by $(\R, +)$. A Levi subgroup is $S = \Z \times \{\bar{0}\} \times \SL_2(\R)$, and is isomorphic to $\widetilde{\SL}_2(\R)$ by construction. Since $\SL_2(\R)$ is not normal in $G/\mathcal Z(G)$, then $S$ is not normal in $G$.
 \end{ex}

\begin{fact}\label{Levi-decomposition}
Let $G$ be a connected definable group and $Z= \mathcal Z(G)$ the center of $G$. Then $G/Z$ and $G/Z^0$ have definable Levi subgroups.
\end{fact}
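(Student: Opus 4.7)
The plan rests on two inputs cited earlier in the paper: by \cite[Corollary 3.3]{OPP96} the quotient of any definable group by its center admits a faithful finite-dimensional representation, and by \cite[Theorem 4.5]{PPSIII} definable linear groups have definable Levi subgroups. Since $Z$ is a definable normal subgroup of $G$, the quotient $G/Z$ is a definable group; OPP96 makes it linear, and PPSIII then supplies a definable Levi subgroup $L$ of $G/Z$. This handles the first half of the statement.

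For $G/Z^0$, note that $Z^0$ is again definable (identity components of definable groups are definable) and central, so $G/Z^0$ is a definable group. The natural projection $\pi\colon G/Z^0 \to G/Z$ has finite central kernel $F := Z/Z^0$, finite because the component group of any definable group is finite. I would take $M := \pi^{-1}(L)^0$, the identity component of the definable subgroup $\pi^{-1}(L)$. Then $M$ is definable and connected, and $\pi(M) = L$ by connectedness of $L$ together with finiteness of $F$. The restricted surjection $M \to L$ has discrete central kernel, so $M$ and $L$ have the same Lie algebra; in particular $M$ is connected and semisimple.

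It remains to see that $M$ is a Levi subgroup of $G/Z^0$, which by \cite[Theorem 3.18.13]{Varadarajan} amounts to showing that $M$ is a maximal connected semisimple subgroup. If $M \subseteq M'$ with $M'$ connected and semisimple, then $\pi(M') \supseteq L$ is connected and semisimple in $G/Z$; maximality of $L$ forces $\pi(M') = L$, and connectedness of $M'$ places it inside $\pi^{-1}(L)^0 = M$, giving $M' = M$. The main obstacle is exactly this transport of maximality across the finite central quotient $\pi$; everything else is routine once Levi subgroups are identified with maximal connected semisimple subgroups and one observes that $F$, being finite, does not perturb the Lie algebra.
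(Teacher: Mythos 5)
Your argument is correct and follows the paper's own route: both use \cite[Cor 3.3]{OPP96} to make $G/Z$ linear and \cite[Theo 4.5]{PPSIII} to get a definable Levi subgroup there, and then transfer to $G/Z^0$ via the finiteness of $Z/Z^0$. The paper leaves that last transfer as a one-line remark, while you spell it out (pulling back $L$ along the finite central quotient and taking the identity component), but it is the same proof.
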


\begin{proof}
By \cite[Cor 3.3]{OPP96}, $G/Z$ is a matrix group, and by \cite[Theo 4.5]{PPSIII} matrix definable groups have a definable Levi decomposition. Since $Z$ is a finite extension of $Z^0$, it follows that $G/Z^0$ has definable Levi subgroups too.
\end{proof}

\begin{cor} \label{ZSdef}
 Let $G$ be a connected real Lie group with center $Z$ and $S$ a Levi subgroup. If $G$ is definable then $ZS$ and $Z^0 S$ are definable subgroups of $G$.
\end{cor}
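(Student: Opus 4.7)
The plan is to derive definability of $SZ$ and $Z^0 S$ by pulling back along the quotient maps $\pi\colon G\to G/Z$ and $\pi_0\colon G\to G/Z^0$ the definable Levi subgroups provided by Fact~\ref{Levi-decomposition}. First, the center $Z$ is cut out by the first-order formula $\forall h\,(gh=hg)$, so it is definable, and so is its definably connected component $Z^0$; consequently $\pi$ and $\pi_0$ are definable surjective homomorphisms between definable groups.

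The key step is to identify $SZ/Z$ and $SZ^0/Z^0$ as Levi subgroups of the respective quotients. For $\pi$, the image $\pi(S)=S/(S\cap Z)$ is connected, and since $S\cap Z$ sits inside the discrete center $\mathcal Z(S)$ of the connected semisimple group $S$, it is also semisimple. Maximality follows from a dimension count: $Z^0$ is a connected normal solvable subgroup of $G$, hence $Z^0\subseteq\mathcal R(G)$, which gives $\mathcal R(G/Z)=\pi(\mathcal R(G))$ and forces every Levi subgroup of $G/Z$ to have dimension $\dim S=\dim\pi(S)$. The same argument applies to $\pi_0(S)$ inside $G/Z^0$.

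With this in hand, Fact~\ref{Levi-decomposition} supplies a definable Levi subgroup $\bar S$ of $G/Z$; since all Levi subgroups are conjugate, $\pi(S)=g\bar S g^{-1}$ for some $g$, and conjugation by a fixed element is a definable automorphism, so $\pi(S)$ is definable. Therefore $SZ=\pi^{-1}(\pi(S))$ is a definable subgroup of $G$, and the same reasoning applied to $Z^0$ and $\pi_0$ shows that $Z^0 S$ is definable. The only point requiring any verification is the maximality of $\pi(S)$ as a connected semisimple subgroup of $G/Z$, which is handled uniformly by the dimension computation above; everything else is a routine application of definability being preserved under quotients by definable normal subgroups, preimages, and conjugation.
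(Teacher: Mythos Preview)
Your argument is correct and follows the same route as the paper: $ZS$ and $Z^0S$ are recovered as preimages of definable Levi subgroups of $G/Z$ and $G/Z^0$ supplied by Fact~\ref{Levi-decomposition}. You have simply made explicit two points the paper leaves implicit---that $\pi(S)$ is itself a Levi subgroup of the quotient, and that definability of one Levi subgroup transfers to all of them via conjugation---so nothing further is needed.
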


 \begin{proof}
 If $G$ is definable, then  $ZS$ and $Z^0 S$ are pre-images in $G$ of definable Levi subgroups in $G/Z$ and $G/Z^0$, respectively.
\end{proof}

\vs

We will characterize when a connected real Lie group $G$ is isomorphic to a group definable in an o-minimal expansion of the real field as follows. Let $R$ be the solvable radical of $G$, $S$ a Levi subgroup of $G$ and $Z$ the center. We will show that whenever $Z$ has a definable copy, there is a semialgebraic copy of $ZS$ (Section \ref{ZS}) so that if $R$ also has a definable copy (which can be defined in $\mathbb R_\text{exp}$ by Fact \ref{Lie-done-solvable}) we can build a definable (in $\mathbb R_\text{exp}$) copy of $G$ as a quotient of a definable semidirect product of definable copies of $R$ and $ZS$ mod out by a definable group isomorphic to $R\cap ZS$ (Section \ref{all together now}).

\medskip

In the proof of Theorem \ref{general} we will reduce to the connected case, and then generalise using the following lemma, which is probably well known but we could not find a reference for it.

\begin{lem} \label{central-finite}
Let $G$ be a group and $H$ a normal subgroup of finite index. Assume that $H$ is definable in a structure $\mathcal M$ and that for a set $\mathcal G:=\{g_\sigma\}_{\sigma\in G/H}$ of representatives of the $H$-coclasses in $G$ there are definable (in $\mathcal M$) maps  $f_\sigma:H\rightarrow H$ such that $f_\sigma(h)=g_\sigma h g_\sigma^{-1}$.

Then $G$ is isomorphic to group definable in $\mathcal M$.
\end{lem}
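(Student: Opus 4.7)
My plan is to realise $G$ as the standard extension-by-cocycle of $H$ by the finite group $G/H$, built from the given data $(g_\sigma, f_\sigma)$, and to observe that the resulting group law is piecewise definable in $\mathcal M$.

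First I would define the $2$-cocycle $c : (G/H)^2 \to H$ by $c(\sigma,\tau) := g_\sigma g_\tau g_{\sigma\tau}^{-1}$. Because $H$ is normal in $G$, each $c(\sigma,\tau)$ genuinely lies in $H$, and since $G/H$ is finite, $c$ takes only finitely many values, all of which can be absorbed into $\mathcal M$ as parameters.

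Next, on the definable set $X := H \times (G/H)$ (a finite disjoint union of copies of $H$) I would introduce the operation
\[
(h, \sigma) \ast (h', \tau) \ := \ \bigl(h \cdot f_\sigma(h') \cdot c(\sigma,\tau),\ \sigma\tau \bigr).
\]
For each of the finitely many pairs $(\sigma,\tau) \in (G/H)^2$, the corresponding restriction is definable in $\mathcal M$: $f_\sigma$ is definable by hypothesis, $c(\sigma,\tau)$ is a constant of $H$, and the multiplication on $H$ is definable. Hence $\ast$ is definable on all of $X$.

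Finally I would verify that the map $\Phi : X \to G$, $(h,\sigma) \mapsto h\, g_\sigma$, is a group isomorphism. The identity
\[
(h g_\sigma)(h' g_\tau) \ =\ h \,(g_\sigma h' g_\sigma^{-1})\,(g_\sigma g_\tau g_{\sigma\tau}^{-1})\, g_{\sigma\tau}\ =\ \bigl(h \, f_\sigma(h') \, c(\sigma,\tau)\bigr)\, g_{\sigma\tau}
\]
shows $\Phi$ intertwines $\ast$ with the multiplication of $G$, and $\Phi$ is bijective because every element of $G$ has a unique expression $h g_\sigma$. Associativity and inverses on $(X,\ast)$ then come for free by transport along $\Phi$. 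There is essentially no obstacle beyond bookkeeping: the crucial observation is simply that finiteness of $G/H$ ensures that both the cocycle values and the conjugation data amount to a finite collection of definable ingredients, so everything fits inside $\mathcal M$.
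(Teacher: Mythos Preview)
Your argument is correct and is essentially the same as the paper's: both realise $G$ as an extension of the finite quotient $G/H$ by $H$ via the cocycle data $(f_\sigma, c(\sigma,\tau))$, and observe that finiteness of $G/H$ makes the resulting group law a finite case split over definable pieces. The only cosmetic difference is that the paper writes coset representatives on the left (elements as $g_\sigma x$, with cocycle $h_{\sigma\mu}$ satisfying $g_\sigma g_\mu = g_{\sigma\mu} h_{\sigma\mu}$), whereas you place them on the right; since $H$ is normal this changes nothing of substance.
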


\begin{proof}
Let $g_\sigma, g_\mu$ be any two elements of $\mathcal G$, and let $g_{\sigma\mu}\in \mathcal G$ and $h_{\sigma\mu}\in H$ be such that $g_\sigma g_\mu=g_{\sigma\mu}h_{\sigma\mu}$. Let $x,y\in H$.

\begin{eqnarray*}
g_\sigma x g_\mu y & = g_{\sigma\mu} g_{\sigma\mu}^{-1}(g_\sigma x g_\sigma^{-1}) (g_\sigma g_\mu) y  \\
 & = g_{\sigma\mu} \left(g_{\sigma\mu}^{-1} f_\sigma\left(x\right) g_{\sigma\mu}\right) h_{\sigma\mu} y \\
 &= g_{\sigma\mu} f_{\sigma\mu}^{-1}\left(f_{\sigma}\left(x\right)\right) h_{\sigma\mu} y.
\end{eqnarray*}

Therefore, if we define $G_\text{def}$ as $\mathcal G\times H$ with multiplication in $G_\text{def}$ defined by
\[
(g_\sigma, x)\odot (g_\mu, y):= \left(g_{\sigma\mu}, f_{\sigma\mu}^{-1}\left(f_{\sigma\mu}\left(x\right)\right) h_{\sigma\mu} y\right),
\]
then $G_\text{def}$ is a definable group and the map $G_\text{def}\rightarrow G$ defined by $(g_\sigma, x)\mapsto g_\sigma x$ is an isomorphism.
\end{proof}

%
%

\begin{rem}
If $G$ is a Lie group and $\M$ is an o-minimal structure over the real numbers, then Lemma \ref{central-finite} provides a Lie isomorphism with a definable Lie group, since any group definable in an o-minimal structure with domain $\R$ is a Lie group by \cite{Pi88} and any bijective homomorphism of Lie groups is a Lie isomorphism (see \cite[Theorem 3.4 pg.18]{OnishchikVinbergI}).
\end{rem}


\section{Central extensions of semisimple groups}\label{ZS}

In this section we focus on Lie groups $G$ that are central extensions of a connected semisimple group. We show that  $G$ has a semialgebraic copy if and only if $G$ and its center have finitely many connected components. The following fact allows us to find a direct complement of $G^0$, whenever $G$ is abelian:

\begin{fact} \label{abelian-divisible}\cite[5.2.2]{Scott}
Let $H < G$ be abelian groups. If $H$ is divisible, then $G = H \times K$ for some $K < G$.
\end{fact}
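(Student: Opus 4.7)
The plan is to use the standard fact that a divisible abelian group is injective in the category of abelian groups (equivalently, an injective $\Z$-module). This is exactly Baer's criterion: an abelian group $H$ is injective iff every homomorphism from an ideal $n\Z$ of $\Z$ into $H$ extends to $\Z \to H$, and this extension property is precisely the divisibility of $H$. Once injectivity of $H$ is in hand, applying the extension property to the identity map $\id_H \colon H \to H$ along the inclusion $H \hookrightarrow G$ produces a retraction $\pi \colon G \to H$. Setting $K := \ker \pi$ gives the desired splitting: every $g \in G$ decomposes as $g = \pi(g) + (g - \pi(g))$ with $\pi(g) \in H$ and $g - \pi(g) \in K$, while $H \cap K = 0$ since $\pi$ restricts to the identity on $H$. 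Hence $G = H \times K$.

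If one prefers to avoid the language of injective modules, an entirely elementary direct approach works. Order the family $\mathcal F := \{K \le G : K \cap H = 0\}$ by inclusion. This family is nonempty (it contains $\{0\}$) and closed under unions of chains, so Zorn's lemma produces a maximal element $K \in \mathcal F$. It suffices to show that $H + K = G$. Suppose for contradiction that some $g \in G$ lies outside $H + K$. If no nonzero multiple of $g$ lies in $H + K$, then $K + \langle g \rangle$ still has trivial intersection with $H$, contradicting maximality. Otherwise, let $n > 0$ be minimal with $ng \in H + K$, say $ng = h + k$ with $h \in H$, $k \in K$. By divisibility, pick $h' \in H$ with $nh' = h$ and set $g' := g - h'$, so that $ng' = k \in K$. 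A short verification using the minimality of $n$ and the relation $K \cap H = 0$ shows that $(K + \langle g' \rangle) \cap H = 0$, again contradicting maximality of $K$. Therefore $G = H + K$, and combined with $H \cap K = 0$ this yields $G = H \times K$.

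The only genuine subtlety is the last verification in the Zorn approach, which must carefully handle both the case where $g$ has infinite order modulo $H + K$ and the case where it has finite order; the injective-module route absorbs this into Baer's criterion. Either way, no o-minimality or Lie-theoretic input is involved, and indeed the statement is the standard theorem that divisible subgroups of abelian groups are direct summands.
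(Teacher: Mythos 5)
Your proposal is correct. The paper does not prove this fact at all---it simply cites it as Theorem 5.2.2 of Scott's \emph{Group Theory}---and both of your arguments (the retraction obtained from injectivity of divisible abelian groups via Baer's criterion, and the direct Zorn's lemma argument choosing $K$ maximal with $K \cap H = 0$ and using divisibility to adjust a putative element outside $H + K$) are the standard textbook proofs of exactly this statement, so there is nothing in the paper to diverge from.
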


Central extensions of definable groups have been previously studied by Hrushovski, Peterzil and Pillay in \cite{HPP}. As they recall, given a central group extension $1 \to A \to H \xrightarrow{\pi} G$, if $s \colon G \to H$ is a section for $\pi$, and $h_s(x, y) = s(x)s(y)s(xy)^{-1}$, then $h_s$ is a $2$-cocycle from $G \times G$ to $A$ and the group $H$ is isomorphic to the group $H'$ whose underlying set is $A \times G$ and whose group operation is given by $(t, x) \cdot (s, y) = (t + s + h_s(x, y), xy)$. In this context, assume that $G$ be a definably connected group definable in an o-minimal expansion \M\ of the real field (therefore $G$ is a connected Lie group) and suppose $\pi \colon H \to G$ is a covering homomorphism with kernel $\Gamma$, for some connected Lie group $H$.
 Given an injective homomorphism $f \colon \Gamma \to A$ into an abelian group $A$, one can form the group $H_A = H \times_{\Gamma} A$, given by the amalgamated direct product of $H$ and $A$, where isomorphic subgroups $\Gamma < H$ and $f(\Gamma) < A$ have been identified. The group
 $H_A$ is therefore isomorphic to a group whose underlying set is $A \times G$ and whose group operation is determined by a
 $2$-cocycle $h_s$ as above. 

We will use the following Fact from \cite{HPP}. In the setting described in the previous paragraph, the following holds:

\begin{fact} \cite[Theo 8.5]{HPP} \label{HPPcocycle}
 The $2$-cocycle $h_s \colon G \times G \to \Gamma$ induced by a section $s$   of the cover $\pi \colon H \to G$ is definable in \M\ with finite image in $\Gamma$. It follows that the group $H_A = H \times_{\Gamma} A$ is definable in the two-sorted structure consisting of \M\ and $(A, +)$.
\end{fact}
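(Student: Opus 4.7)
The plan is to construct a section $s \colon G \to H$ piecewise over a cell decomposition of $G$, and then verify that the associated cocycle $h_s$ is definable with finite image by combining continuity of the local sections with discreteness of $\Gamma$. The definability of $H_A$ in the two-sorted structure will then follow essentially formally.

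First, I would apply o-minimal cell decomposition to obtain a finite partition $G = C_1 \sqcup \cdots \sqcup C_n$ into definable cells. Each $C_i$ is definably homeomorphic to an open box in some $\R^{k_i}$, hence in particular simply connected. For each $i$, pick a base point $x_i \in C_i$ together with a lift $\tilde{x}_i \in \pi^{-1}(x_i)$; since $\pi$ is a covering and $C_i$ is simply connected, there is a unique continuous section $s_i \colon C_i \to H$ with $s_i(x_i) = \tilde{x}_i$, and these glue to a set-theoretic section $s \colon G \to H$.

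Next, I would refine the product partition $\{C_i \times C_j\}$ by intersecting with the preimages of the cells under the multiplication map $m \colon G \times G \to G$, and then apply cell decomposition once more, to obtain a finite definable partition $G \times G = D_1 \sqcup \cdots \sqcup D_N$ such that on each $D_\alpha$ there are fixed indices $i, j, k$ with $D_\alpha \subseteq C_i \times C_j$ and $m(D_\alpha) \subseteq C_k$. On $D_\alpha$ the three maps $(x,y) \mapsto s(x)$, $s(y)$, $s(xy)$ are each continuous, so $h_s(x,y) = s(x)\,s(y)\,s(xy)^{-1}$ is continuous on $D_\alpha$; since $h_s$ takes values in the discrete group $\Gamma$ and $D_\alpha$ is definably connected, $h_s$ is constant on $D_\alpha$, equal to some $\gamma_\alpha \in \Gamma$. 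This yields simultaneously the finite image $\{\gamma_1, \ldots, \gamma_N\}$ and the definability of $h_s$, encoded as a finite partition of $G \times G$ into definable sets with labels in $\Gamma$.

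With this in hand, the group operation on $H_A \cong A \times G$, namely $(a, g) \cdot (a', g') = (a + a' + f(h_s(g, g')), gg')$, is piecewise of the form $(a + a' + f(\gamma_\alpha),\, gg')$ on $D_\alpha$, hence definable in the two-sorted structure consisting of $\M$ and $(A, +)$ once the finitely many elements $f(\gamma_\alpha) \in A$ are added as constants. The main obstacle is the interplay between the cover $\pi$ and the o-minimal structure: one must arrange the cell decomposition so that, inside each cell $D_\alpha$ of $G \times G$, none of the three arguments $x$, $y$, $xy$ ever crosses a discontinuity of $s$. This is exactly what the refinement by $m^{-1}$ of the cells of $G$ achieves, and is the single essential geometric ingredient in the argument.
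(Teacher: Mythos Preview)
The paper does not supply its own proof of this statement; it is quoted as a fact from \cite[Theorem~8.5]{HPP}, so there is no in-paper argument to compare against. Your reconstruction is correct and is essentially the standard approach: cell-decompose $G$ into simply connected pieces to obtain piecewise continuous local sections, refine $G\times G$ compatibly with multiplication so that on each piece all three of $s(x)$, $s(y)$, $s(xy)$ are continuous, and then use discreteness of $\Gamma$ together with connectedness of the pieces to conclude that $h_s$ is constant on each cell of a finite definable partition. The passage to definability of $H_A$ in the two-sorted structure, with the finitely many values $f(\gamma_\alpha)$ as parameters, is then immediate exactly as you describe.

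One small clarification worth making explicit: the fact, as stated, should be read as asserting the existence of \emph{some} section $s$ whose cocycle is definable with finite image (namely the one you construct), not that this holds for an arbitrary set-theoretic section. Different sections yield cohomologous cocycles, but an arbitrary section need not produce a definable cocycle.
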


\begin{theo}\label{centralextension}
Let $G$ be a real Lie group. Suppose $G$ is a central extension of a (possibly trivial) connected semisimple group. Then $G$ has a definable copy if and only if both $G$ and its center have finitely many connected components. If this condition holds, then $G$ admits a semialgebraic copy.
\end{theo}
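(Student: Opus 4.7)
My plan is to split the equivalence; the necessity is immediate (a definable group has finitely many connected components, and the center is a definable subgroup with the same property). The real content is the construction of a semialgebraic copy under the hypotheses, which I would first carry out in the connected case and then extend to the general case via Lemma \ref{central-finite}.

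Assume $G$ is connected and set $\bar S = G/\mathcal Z(G)$ and $Z = \mathcal Z(G)^0$. Since $G$ is a central extension of a connected semisimple group, $\bar S$ is itself a connected centerless semisimple Lie group and hence admits a semialgebraic copy by \cite[Theorem 4.3]{PPSIII}. The connected abelian group $Z \cong \R^a \times \T^b$ is divisible, so Fact \ref{abelian-divisible} yields $\mathcal Z(G) = Z \times F$ for some finite $F$, giving $\mathcal Z(G)$ a semialgebraic copy as well. Let $\pi \colon \tilde{\bar S} \to \bar S$ be the universal cover with kernel $\Gamma = \pi_1(\bar S)$; Fact \ref{HPPcocycle} provides a section $s$ whose $2$-cocycle $h(x,y) = s(x)s(y)s(xy)^{-1}$ is semialgebraic with finite image in $\Gamma$. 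Because the Lie algebra of $G$ splits as $\mathfrak{z} \oplus \mathfrak{s}$, the universal cover is $\tilde G = \R^{a+b} \times \tilde{\bar S}$, and $G = \tilde G / \Lambda$ for a discrete central $\Lambda \leq \R^{a+b} \times \Gamma$. The hypothesis that $\mathcal Z(G)$ has finitely many components forces $\Lambda \cap \R^{a+b}$ to be a rank-$b$ lattice and the image $\Gamma_0$ of $\Lambda$ in $\Gamma$ to have finite index. Quotienting first by this lattice rewrites $G$ as $(Z \times \tilde{\bar S}) / \Lambda'$, with $\Lambda'$ the graph of a homomorphism $\psi \colon \Gamma_0 \to Z$. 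Finally, replacing $\tilde{\bar S}$ by its HPP-style description as $\bar S \times \Gamma$ with multiplication twisted by $h$, and choosing a finite system of coset representatives for $\Gamma/\Gamma_0$, I would realize $G$ on the semialgebraic set $Z \times \bar S \times (\Gamma/\Gamma_0)$. Every ingredient of the resulting group law is semialgebraic: $h$ is, only finitely many values of $\psi$ appear (the relevant sums in $\Gamma$ lie in a fixed finite subset), and the reduction modulo $\Gamma_0$ is a finite lookup.

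To remove connectedness I would apply Lemma \ref{central-finite} with $H = G^0$: the normal subgroup $G^0$ remains a central extension of a connected semisimple group whose center has finitely many components, so the connected case supplies a semialgebraic copy of $G^0$. It then suffices to verify, for coset representatives $g_\sigma$ of $G/G^0$, that conjugation $f_\sigma \colon G^0 \to G^0$ is semialgebraic in this copy; this follows by combining rigidity of automorphisms of centerless semisimple real Lie groups (which yields semialgebraicity of $f_\sigma$ on the factor $\bar S$), the semialgebraicity of automorphisms of the abelian group $Z \times F$, and compatibility of these with the cocycle data. The hard part will be the connected case, specifically showing that $\psi$ is a well-defined homomorphism in the identification $(Z \times \tilde{\bar S})/\Lambda'$ and that the reduction modulo $\Gamma_0$ can be carried out uniformly enough that the full multiplication on $Z \times \bar S \times (\Gamma/\Gamma_0)$ is globally semialgebraic rather than merely piecewise so.
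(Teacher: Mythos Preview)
Your proposal is correct, but the paper's route is more direct in two places.

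In the connected case you pass through the universal cover $\tilde{\bar S}$ of $\bar S=G/\mathcal Z(G)$, obtain $G=(Z\times\tilde{\bar S})/\Lambda'$, and then unwind this to a semialgebraic group law on $Z\times\bar S\times(\Gamma/\Gamma_0)$. The paper instead works with the Levi subgroup $H'=G'=[G,G]$: since $G=\mathcal Z(G)\cdot G'$ and $\mathcal Z(G')=G'\cap\mathcal Z(G)$, one has $G\cong A\times_{\Gamma}H'$ with $A=\mathcal Z(G)$ and $\Gamma=\mathcal Z(H')\subset A$. Applying Fact~\ref{HPPcocycle} directly to the cover $H'\to\bar S=H'/\mathcal Z(H')$ gives a semialgebraic cocycle $h_s\colon\bar S\times\bar S\to\Gamma\subset A$ with finite image, and $G$ is realized on $A\times\bar S$ with multiplication $(t,x)\odot(s,y)=(t+s+h_s(x,y),xy)$. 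This is the same model you reach (your $Z\times(\Gamma/\Gamma_0)$ is just $A$), but it bypasses the universal cover, the lattice $\Lambda$, the homomorphism $\psi$, and the coset bookkeeping for $\Gamma/\Gamma_0$.

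For the disconnected case your plan via Lemma~\ref{central-finite} works but is much heavier than needed. Because $G$ is a central extension of a \emph{connected} semisimple group, the quotient $G/\mathcal Z(G)$ is connected even when $G$ is not; hence $G=\mathcal Z(G)\cdot S_1$ for any Levi subgroup $S_1$ of $G^0$, and the connected-case construction applies verbatim with $A=\mathcal Z(G)$ (now possibly disconnected but still of the form $F\times\R^n\times\SO_2(\R)^k$). Your route would instead require showing that every Lie automorphism of the semialgebraic copy of $G^0$ is semialgebraic; that is true, but it is a genuine theorem (the paper proves it separately in Section~\ref{sec:automorphisms}), not the kind of remark you can absorb into ``compatibility with the cocycle data''.
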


 \begin{proof}
 Let $Z =\mathcal{Z}(G)$ be the center of $G$. If $G$ is definable, then $G^0$, $Z$ and $Z^0$ are definable too. Since $G/G^0$ and $Z/Z^0$ are discrete groups, they need to be finite.

\bigskip
Conversely, suppose both $G$ and $Z$ have finitely many connected components.
We will define $G_\text{def}$ as an extension of a semisimple connected matrix group $S$ by an abelian group $A = F \times \mathbb{R}^n \times\SO_2(\R)^k$ for some $n$, $k \in \mathbb{N}$ and finite $F$. The underlying set of $G_\text{def}$ will be $A\times S$ and the group operation given in terms of a definable $2$-cocycle.

\bigskip
Assume first $G$ is connected.
It is well known that any abelian connected Lie group is  isomorphic to $\R^n \times \SO_2(\R)^k$ for some $n, k \in \N$. This completes the case when the semisimple group is trivial.

\bigskip

Suppose now $G$ is a connected central extension of an infinite connected semisimple group.  If $Z$ is finite, then $G$ is semisimple and has a definable copy by Fact \ref{Lie-done-simple}.

Assume $\dim Z > 0$. Note that the connected component of the center $Z^0$ is the solvable radical $R$ of $G$, since otherwise $R/Z^0$ would be an infinite connected solvable normal subgroup of a semisimple group, contradiction.

Since connected semisimple Lie groups are perfect, it follows that the commutator subgroup $G'$ is the unique Levi subgroup of $G$ and $G = ZG'$ is the unique Levi decomposition of $G$. Note that $\mathcal{Z}(G') = G' \cap Z$, so the quotient group $G/Z \cong G'/(G' \cap Z) = G'/\mathcal{Z}(G')$ is a centerless semisimple group, and therefore has a semialgebraic copy $S$.

Hence $G$ is Lie isomorphic to a group $H = A \times _{\Gamma} H'$, where $A = \mathcal{Z}(H) = F \times \R^n \times \SO_2(\R)^k$ (for some finite $F$ and $n, k \in \N$ by Fact \ref{abelian-divisible}), $S = H'/Z(H')$ is a  semialgebraic group and $\Gamma = A \cap H' = Z(H')$. As $(A, +)$ is semialgebraic, then $H = A \times _{\Gamma} H'$ is semialgebraic by Fact \ref{HPPcocycle}.

That is, given $s \colon S \to H'$ a section for the canonical projection $\pi \colon H' \to S$  and  $h_s \colon S \times S \to \Gamma$ be the 2-cocycle induced by $s$, $h_s$ is a semialgebraic map and has finite image. So we can
consider on $H_{\text{def}}:= A \times S$ the group operation given by
  \[
  (t, x) \odot (s, y) = (t + s + h_s(x, y), xy)
  \]

 As observed before, the resulting semialgebraic group $(H_\text{def}, \odot)$ is isomorphic to $H$ and therefore to $G$.

\bigskip

Suppose now $G$ is not connected and let $\pi \colon G \to  G/Z$ be the canonical projection. As $G/Z$ is connected semisimple, it follows that $\pi(S_1) = G/Z$ for any Levi subgroup $S_1$ of $G^0$. Therefore $G = ZS_1$ and, as for connected groups, there is some Lie group $H = A \times_{\Gamma} H'$ ($\Gamma = A \cap H' = Z(H')$) isomorphic to $G$ such that $A$ and $S = H'/Z(H')$ are semialgebraic. So one can buid a semialgebraic copy $H_\text{def}$ with underlying set $A \times S$ and group operation given by the same construction as in the connected case.
\end{proof}

\section{The connected case} \label{all together now}

In this section we give a characterization of connected Lie groups with a definable copy and we show that when it exists, such a definable copy can be found in $\R_{\exp}$. The following fact holds for both Lie and definable groups.

\begin{fact} \label{semisimple-centerless}
Let $G$ be a connected centerless semisimple group. Then the only normal solvable subgroup of $G$ is the trivial one.
\end{fact}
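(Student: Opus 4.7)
The plan is to reduce an arbitrary normal solvable subgroup $N \trianglelefteq G$ to a central one, and then invoke the hypothesis that $G$ is centerless.

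First I would pass to the identity component $N^0$. This is a characteristic subgroup of $N$, hence normal in $G$, and it is (definably) connected and solvable. By the very definition of semisimplicity, the solvable radical $\mathcal R(G)$ --- the maximal (definably) connected normal solvable subgroup of $G$ --- is trivial, so $N^0 = \{e\}$. Thus $N$ is discrete in the Lie case, or finite in the definable case (where $N/N^0$ is always finite).

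Next I would show that such an $N$ must be central. In the definable setting, $N$ is finite, so $G$ acts on $N$ by conjugation, and the centralizer $C_G(n)$ of each $n \in N$ has finite index in $G$; since $G$ is (definably) connected, $C_G(n) = G$ for every $n$, i.e.\ $N \subseteq \mathcal Z(G)$. In the Lie setting, for each fixed $n \in N$ the map $g \mapsto gng^{-1}$ is a continuous map from the connected space $G$ to the discrete space $N$, hence constant and equal to its value at $g=e$, namely $n$; so again $N \subseteq \mathcal Z(G)$. Since $\mathcal Z(G) = \{e\}$ by assumption, $N$ is trivial.

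There is no real obstacle here, the argument is standard; the only care needed is to state the two ingredients --- ``semisimple $\Rightarrow$ no connected normal solvable subgroup'' and ``normal discrete/finite $\Rightarrow$ central when the ambient group is connected'' --- in a form that covers both the Lie and the definable setting at once, which is why I phrased them with the parenthetical ``(definably)''.
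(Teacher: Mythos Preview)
Your argument is correct and follows the same overall strategy as the paper: reduce to the case where $N$ is discrete (resp.\ finite), observe that a discrete normal subgroup of a connected group is central, and conclude from centerlessness.

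The one place where you diverge is in how you dispose of $N^0$. You invoke directly that semisimplicity means $\mathcal R(G)=\{e\}$, so any connected solvable normal subgroup is trivial. The paper instead takes as its working input only that $G$ has no nontrivial connected \emph{abelian} normal subgroup, and then runs an induction on $\dim N$: if $\dim N>0$ then $N^0$ cannot be abelian, so $(N^0)'$ is a nontrivial connected solvable normal subgroup of strictly smaller dimension, which is trivial by the inductive hypothesis, forcing $N^0$ abelian after all. In effect the paper is re-deriving ``no connected abelian normal $\Rightarrow$ no connected solvable normal'' inside the proof, whereas you take that equivalence (packaged as $\mathcal R(G)=\{e\}$) for granted. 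Your route is shorter; the paper's is more self-contained about which characterization of semisimplicity is being used.
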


\begin{proof}
 Suppose $N$ is a solvable normal subgroup of $G$. We can prove that $N$ is trivial by induction on $n = \dim N$. If $n = 0$, then $N$ is discrete, therefore it is central, and it must be trivial. If $n > 0$, then $N^0$ is also solvable and normal in $G$. We know that $N^0$ cannot be abelian, because $G$ is semisimple. So the derived subgroup of $N^0$ is a non-trivial connected solvable normal subgroup of $G$ with dimension less than $n$, and it must be trivial by induction hypothesis. But then $N^0$ is abelian, contradiction.
\end{proof}

\begin{lem}\label{lemma:intersection}
Let $G = RS$ be a Levi decomposition of a connected real Lie group $G$.
Set $H = ZS$, where $Z$ is the center of $G$. Then $H \cap R = D \times Z^0$ for some discrete group $D$ central in $H$. In particular, $H \cap R$ is abelian.
\end{lem}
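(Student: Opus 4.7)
The plan is to first show that $H \cap R$ is central in $H$ (and hence abelian), then to identify $Z^0$ as the identity component of $H \cap R$, and finally to apply Fact \ref{abelian-divisible} to read off the direct product decomposition.

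For the centrality $H \cap R \subseteq Z(H)$, take $x \in H \cap R$ and decompose $x = z s_1$ with $z \in Z$ and $s_1 \in S$. For any $s \in S$, centrality of $z$ gives $[s,x] = [s,s_1] \in S$, while normality of $R$ in $G$ and $x \in R$ force $[s,x] = (sxs^{-1})x^{-1} \in R$. Hence the map $s \mapsto [s,x]$ is a continuous homomorphism-like map (in fact a crossed homomorphism, but that is not needed here) from the connected group $S$ into the discrete subgroup $S \cap R$ that sends $e$ to $e$, and so must be identically trivial. Thus $x$ centralizes $S$; since $x$ centralizes $Z$ automatically (as $Z$ is the center of $G$), we obtain $x \in Z(ZS) = Z(H)$.

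For the identity-component step, one checks that $H^0 = Z^0 S$, since $Z^0 S$ is connected and $H/(Z^0 S) \cong Z/(Z \cap Z^0 S)$ is a quotient of the discrete group $Z/Z^0$. Moreover $H^0 \cap R = Z^0 (S \cap R)$: if $z_0 s \in R$ with $z_0 \in Z^0 \subseteq R$, then $s = z_0^{-1}(z_0 s) \in R$, so $s \in S \cap R$. Since $S \cap R$ is discrete while $Z^0$ is connected, $(H^0 \cap R)^0 = Z^0$, and then $(H \cap R)^0 = Z^0$ follows from the fact that $(H \cap R)^0 \subseteq H^0 \cap R$ is a connected set through~$e$.

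Now $Z^0$ is a connected abelian Lie group, hence divisible, so Fact \ref{abelian-divisible} applied inside the abelian group $H \cap R$ yields $H \cap R = Z^0 \times D$ for some subgroup $D$. The complement $D$ meets $Z^0 = (H \cap R)^0$ trivially, and therefore is discrete, while $D \subseteq H \cap R \subseteq Z(H)$ shows that $D$ is central in $H$. The main subtlety is the centrality step: the key observation is that conjugating $x \in H \cap R$ by $S$ produces elements lying simultaneously in the semisimple $S$ and the normal solvable $R$, hence in the discrete $S \cap R$, which is then killed by the connectedness of $S$.
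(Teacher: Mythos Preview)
Your proof is correct and reaches the same conclusion, but the route to the key inclusion $H\cap R\subseteq \mathcal Z(H)$ differs from the paper's. You argue elementwise: for $x\in H\cap R$ the commutator map $s\mapsto [s,x]$ is continuous from the connected group $S$ into the discrete set $S\cap R$, hence constant equal to $e$, so $x$ centralises $S$ and therefore all of $H=ZS$. The paper instead passes to the quotient: it notes that $\mathcal Z(H)=Z\cdot\mathcal Z(S)$, so $H/\mathcal Z(H)\cong S/\mathcal Z(S)$ is connected, semisimple and centerless; since the image of the solvable normal subgroup $H\cap R$ there must be trivial by Fact~\ref{semisimple-centerless}, one gets $H\cap R\subseteq\mathcal Z(H)$ in one stroke. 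Your argument is more self-contained (it bypasses Fact~\ref{semisimple-centerless} and uses only the discreteness of $R\cap S$), whereas the paper's is a clean structural reduction once that fact is in hand. You also work out the splitting $H\cap R=Z^0\times D$ explicitly, identifying $(H\cap R)^0=Z^0$ and then applying Fact~\ref{abelian-divisible}; the paper's proof states this decomposition but does not spell it out, so your version is actually more complete on that point.
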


\begin{proof}
Note that $\mathcal{Z}(H) = Z \cdot \mathcal{Z}(S)$, so $H/\mathcal{Z}(H) \cong S/\mathcal{Z}(S)$ is a centerless semisimple group, since $S$ is a connected semisimple group. Because $H \cap R$ is a solvable normal subgroup of $H$, then $H \cap R \subseteq \mathcal{Z}(H)$ by Fact \ref{semisimple-centerless}.
\end{proof}

\begin{theo} \label{connected}
Let $G$ be a connected real Lie group. Then $G$ has a definable copy if and only if its center has finitely many connected components and its solvable radical has a definable copy. If this holds, $G$ has a definable copy in $\mathbb{R}_{\exp}$.
\end{theo}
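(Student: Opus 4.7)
The forward direction is immediate from the preliminaries: if $G$ is isomorphic to a definable group $G_\text{def}$, then $\mathcal{R}(G_\text{def})$ is a definable copy of $R$, and $\mathcal{Z}(G_\text{def})$ is definable with finitely many definably connected components, so $Z/Z^0$ is finite.

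For the converse, fix a Levi decomposition $G = RS$ and set $H = ZS$. The plan is to construct a semialgebraic copy $H_\text{def}$ via Theorem \ref{centralextension}, combine it with a copy $R_\text{def}$ of $R$ definable in $\mathbb{R}_\text{exp}$ (supplied by Fact \ref{Lie-done-solvable}), and realize $G$ as a quotient $(R_\text{def} \rtimes_\varphi H_\text{def})/N_\text{def}$ definable in $\mathbb{R}_\text{exp}$. To apply Theorem \ref{centralextension} to $H$, note that $H$ is a central extension of the connected semisimple group $S/(S \cap Z)$, and $H/H^0$ is a quotient of $Z/Z^0$, hence finite. For $\mathcal{Z}(H)$ to have finitely many components, observe that $G$ being connected forces $\ker \Ad = Z$, so $G/Z$ embeds in $\GL(\mathfrak{g})$; in particular $\Ad(S) \cong S/(S \cap Z)$ is a connected semisimple group admitting a faithful finite dimensional representation and therefore has finite center, giving that $Z(S)/(S \cap Z)$ is finite. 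Since $\mathcal{Z}(H) = Z \cdot Z(S)$, we deduce that $\mathcal{Z}(H)/Z$ is finite, as required, and Theorem \ref{centralextension} yields the desired semialgebraic $H_\text{def}$.

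The final step is to combine $R_\text{def}$ with $H_\text{def}$ following the template of Fact \ref{Lie-done-OP}. The conjugation action of $H$ on $R$ factors through $H/Z \hookrightarrow G/Z \subseteq \GL(\mathfrak{g})$, hence is governed by an algebraic action of a linear semisimple group on $\mathfrak{r}$ via the adjoint representation; because $R$ is triangular-by-compact, this lifts to an action $\varphi$ on $R_\text{def}$ definable in $\mathbb{R}_\text{exp}$, using the exponential map on the triangular part and an algebraic action on the compact factor. We then form $R_\text{def} \rtimes_\varphi H_\text{def}$ and quotient by $N_\text{def} = \{(x, x^{-1}) : x \in R \cap H\}$, which is definable because, by Lemma \ref{lemma:intersection}, $R \cap H = D \times Z^0$ is abelian and central in $H$, and sits inside the explicit abelian factor $A$ of $H_\text{def}$ from the proof of Theorem \ref{centralextension}. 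The main obstacle is the translation of the conjugation action into formulas definable in $\mathbb{R}_\text{exp}$ using the explicit coordinates of the two copies, together with matching up the subgroup $R \cap H$ across $R_\text{def}$ and $H_\text{def}$; once these are in place, the resulting quotient is a group definable in $\mathbb{R}_\text{exp}$ and Lie-isomorphic to $G$ by construction.
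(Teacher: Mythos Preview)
Your overall strategy matches the paper's: build a semialgebraic copy $H_\text{def}$ of $ZS$ via Theorem~\ref{centralextension}, a copy $R_\text{def}$ of $R$ in $\mathbb{R}_{\exp}$, form a semidirect product, and quotient by the anti-diagonal copy of $R\cap H$. Your verification that $H$ and $\mathcal Z(H)$ have finitely many components is essentially the paper's argument in Claim~\ref{cl:3.5}.

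The gap is in the two points you yourself flag as ``the main obstacle''. First, the definable action: your sketch (``exponential on the triangular part, algebraic on the compact factor'') does not work as stated, because conjugation by $\overline S$ on $R=N\rtimes K$ preserves $N$ but need not preserve $K$; the map $k\mapsto \gamma(\bar s)(k)$ lands in $N\cdot K$, not in $K$, and the $N$-component is an a~priori arbitrary smooth function of $(\bar s,k)$ whose definability you have not justified. The paper avoids this by applying Fact~\ref{Lie-done-OP} to the auxiliary group $\overline G = R\rtimes_{\overline\gamma}\overline S$ (whose Levi subgroup $\overline S$ has finite center), obtaining in one stroke a definable $R_\text{def}$, a definable $\overline S_\text{def}$, and a definable action $\varphi$ mimicking $\overline\gamma$; the action of $H_\text{def}$ is then pulled back through the definable projection $H_\text{def}\to \overline S_\text{def}$.

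Second, the kernel: writing $N_\text{def}=\{(x,x^{-1}):x\in R\cap H\}$ presupposes that the copies of $R\cap H$ inside $R_\text{def}$ and $H_\text{def}$ are linked by a \emph{definable} isomorphism, but you built the two groups independently. The paper arranges this by constructing $H_\text{def}$ so that its connected center is literally the subgroup $A=Z^0_\text{def}$ already sitting inside $R_\text{def}$ (and $(H/\mathcal Z(H))_\text{def}=\overline S_\text{def}/\mathcal Z(\overline S_\text{def})$), after which the required definable isomorphism $\alpha\colon X\to Y$ between the two copies of $R\cap H=F\times Z^0$ is the identity on $A$ and a bijection of finite sets on $F$ (Claim~\ref{kernel}). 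Without this coordination, you would still need to produce a definable isomorphism between two abstractly isomorphic definable abelian groups, which is an extra (though tractable) step you have not addressed.
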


\begin{proof}
The conditions are necessary because  the center of any definable group is definable (and therefore it has finitely many connected components) and so is its solvable radical.

\bigskip

Conversely, let $G$ be a connected Lie group with center $Z$ and solvable radical $R$ satisfying the hypothesis. Suppose $G = RS$ is a Levi decomposition of $G$. We will build a definable copy of $G$ as a quotient of a semidirect product of definable copies of $ZS$ and $R$. Set $H = ZS$ and $\overline{S} = H/Z \cong S/(S \cap Z)$. This construction will be done in $\mathbb{R}_{\exp}$, so for the rest of the proof unless we explicitly say otherwise, by \emph{definable} we mean \emph{definable in $\mathbb{R}_{\exp}$}.

\bigskip

 Because $R$ is a normal subgroup in $G$, the subgroup $H$ acts on $R$ by conjugation. That is, there is a homomorphism $\gamma \colon H \to \Aut(R)$ such that $\gamma(x)(r) = x^{-1}rx$ for each $x \in H$ and each $r \in R$.  Since $Z \subset \ker(\gamma)$, there is a homomorphism $\overline{\gamma} \colon \bar{S} \to \Aut(R)$ such that the following diagram commutes:
\[
\begin{tikzcd}[column sep=large, row sep=large]
H  \ar[d, "\pi"' ]   \ar[r,   "\gamma"] & \text{Aut}(R)\\
\bar{S} = H/Z \ar[ur,   "\overline{\gamma}"']
\end{tikzcd}
\]

 \noindent
That is, given $\bar{x} \in \overline{S}$ and $r \in R$, $\overline{\gamma}(\bar{x})(r) = x^{-1}rx$, for any $x \in H$ with $x \in \bar{x}$. \\

\begin{claim}\label{cl:3.4}
There are definable copies  $R_\text{def}$ and  $\overline{S}_\text{def}$,  Lie isomorphisms  \\ $f \colon \overline{S}_\text{def} \to \overline{S}$ and $g \colon R_\text{def} \to R$, and a definable action  $\varphi \colon \overline{S}_\text{def}\times R_\text{def} \to R_\text{def}$ mimicking $\overline{\gamma}$. That is, for any $x\in \overline{S}_\text{def}$ and $r_1,r_2\in R_\text{def}$,
\[
\varphi (x, r_1) = r_2 \quad \text{ if and only if }
\quad \overline \gamma (f^{}(x))(g(r_1)) = g(r_2)
\]

\noindent
Moreover, $R_\text{def}$ contains  definable copies $Z^0_\text{def}$ and $(Z\cap R) _\text{def}$ as definable subgroups.
\end{claim}

 \begin{proof}
 Consider the group $\overline{G} = R \rtimes_{\overline{\gamma}}\overline{S}$. It is a connected group with solvable radical $R$ and Levi subgroup $\overline{S}$. Note that because $\overline{S}$ is a subgroup of $G/Z$, it is a linear group, hence has finite center.
Therefore, by Fact \ref{Lie-done-OP} there is a definable copy  $\overline{G}_\text{def} = R_\text{def} \rtimes_\varphi   \overline{S}_\text{def}$,
Lie isomorphisms  $f \colon \overline{S}_\text{def} \to \overline{S}$ and $g \colon R_\text{def} \to R$ and a definable action $\varphi \colon \overline{S}_\text{def}\times R_\text{def} \to R_\text{def}$ as required.

\bigskip

Note that both $(Z \cap R)_\text{def} = \{ r \in \mathcal{Z}(R_\text{def}) : \varphi(\bar{x}, r) = r \ \forall \bar{x} \in \overline{S}_\text{def}\}$ and $(Z \cap R)_\text{def}^0 = Z^0_\text{def}$ are definable subgroups of $R_\text{def}$.
\end{proof}

 Set $A = Z^0_\text{def} < R_\text{def}$. Our next step is to build a definable copy $H_\text{def}$ of $H$ having $A$ as its solvable radical and a definable action $H_\text{def}\times R_\text{def} \to R_\text{def}$ mimicking the conjugation map of $H$ on $R$:

 \begin{claim}\label{cl:3.5}
There is a definable copy $H_\text{def}$ with $\mathcal{Z}(H_\text{def})^0 = A$, a Lie isomorphism $h \colon H_\text{def} \to H$ and a definable action
$\psi \colon H_\text{def}\times R_\text{def} \to R_\text{def}$ such that
\[
\psi(x, r_1) = r_2 \quad \text{ if and only if } \quad h(x)^{-1} g(r_1) h(x) = g(r_2)
\]

\noindent
where $g \colon R_\text{def} \to R$ is the isomorphism from Claim \ref{cl:3.4}.

As before, definability of every set and map is meant in $\mathbb{R}_{\exp}$.
\end{claim}

\begin{proof}
Since $H$ is a central extension of a semisimple group, by Theorem \ref{centralextension} $H$ has a definable (semialgebraic) copy if and only if $H$ and its center have finitely many connected components. Note that the subgroup $Z^0S$ is contained in $H^0$, because both $Z^0$ and $S$ are connected subgroups. Moreover $Z^0 S$ has finite index in $ZS$, as $Z/Z^0$ is finite by assumption. Therefore $H^0 = Z^0 S$ and $H$ has finitely many connected components. Notice that $G/Z$ is linear since $Z$ is the kernel of the adjoint representation of $G$, and $H/Z \cong S/(S \cap Z)$ is a Levi subgroup of $G/Z$, since $S$ is a Levi subgroup of $G$. Now, Levi subgroups of a linear groups are linear as well and have a finite center (see \cite[Chap. 18 Proposition 4.1 and Theorem 4.2]{Hochschild}). Therefore $\mathcal{Z}(H)$ is a finite extension of $Z$, which is a finite extension of $Z^0$. It follows that $\mathcal{Z}(H)$ has finitely many connected components too and $\mathcal{Z}(H)^0 = Z^0$.  \\

As done in the proof of  Theorem \ref{centralextension},  we can get a definable copy $H_\text{def}$ where the underline set of the group is $\mathcal{Z}(H)_\text{def} \times (H/\mathcal{Z}(H))_\text{def} $. By Fact \ref{abelian-divisible}, $\mathcal{Z}(H) = F_1 \times Z^0$ for some finite $F_1 < H$. Thus we can take $\mathcal{Z}(H) _\text{def}  = F_1 \times A$ and   $ (H/\mathcal{Z}(H))_\text{def} = \overline{S}_\text{def}/\mathcal{Z} (\overline{S}_\text{def})$.  \\

Note that $\mathcal{Z}(H) _\text{def} = \mathcal{Z}(H _\text{def}) $ and $(H/\mathcal{Z}(H))_\text{def}  = H_\text{def}/\mathcal{Z}(H_\text{def})$. Moreover, $\mathcal{Z}(H) = Z \cdot \mathcal{Z}(S)$, so we can take  $Z_\text{def} = F_2 \times A$ (for some finite subgroup $F_2$ of $F_1$) to be a definable subgroup of $H_\text{def}$ and $H _\text{def}/Z_\text{def} = \overline{S}_\text{def}$ by construction. Denoted by $\pi \colon H_\text{def} \to H_\text{def}/Z_\text{def}$ the canonical projection, the map $\psi \colon H_\text{def}\times R_\text{def} \to R_\text{def}$ defined by
$\psi(x, r) = \varphi(\pi(x),r)$ is the required definable action.
 \end{proof}
Now in order to build a definable copy of $G$ out of the definable copies $R_\text{def}$ and $H_\text{def}$ from before, we need to find a definable isomorphism between definable copies $(R \cap H)_\text{def}$ in them. This is provided by the following:

\begin{claim} \label{kernel}
 $H_\text{def}$ and $R_\text{def}$ contain definable subgroups $X$ and $Y$ which are copies of $H \cap R$, and such that $X=A \times F_X$ and $Y=A\times F_Y$  where $F_X$ and $F_Y$ are both isomorphic to some finite abelian group $F$. In particular, there is a definable isomorphism  $\alpha \colon X \to Y$.
 \end{claim}

\begin{proof}
By Lemma \ref{lemma:intersection} and the fact that $H$ has a definable copy, $H \cap R = F \times Z^0$, for some finite abelian $F < H \cap R$. Then $X = h^{-1}(F) \times A$ and $Y = g^{-1}(F) \times A$ are definable copies (and definable subgroups) of $H \cap R$
in $H_\text{def}$ and $R_\text{def}$ respectively. Clearly, define $\alpha \colon X \to Y$ to be $\alpha(y, a) = (g^{-1}(h(y)), a)$ for each $y \in h^{-1}(F)$ and $a \in A$.
\end{proof}
We can now put all together. Let $R_\text{def}$ be as in Claim \ref{cl:3.4}, and $H_\text{def}$ and $\psi$ as in the conclusion of Claim \ref{cl:3.5}. Take the definable semi-direct product $H_\text{def} \ltimes_{\psi} R_\text{def}$ and the map
\[\Phi \colon H_\text{def} \ltimes_{\psi} R_\text{def} \to G\] given by $\Phi (x, r) =  h(x)g(r)$ where $h \colon H_\text{def} \to H$ and $g \colon R_\text{def} \to R$ are the isomorphisms from Claims \ref{cl:3.4} and \ref{cl:3.5}. Note that $\Phi$ is a surjective smooth homomorphism and its kernel is  the set of $(x, r) \in H_\text{def} \times  R_\text{def}$ such that $h(x)g(r) = e$. In particular, the images of $x$ and $r$ belong to $H \cap R$, so $x \in X$ and $r \in Y$ and, if $\alpha \colon X \to Y$ is the definable map in Claim \ref{kernel}, then
\[
\ker \Phi = \{(x, \alpha(x)^{-1} ): x \in X \}
\]
is definable.

\medskip

$(H_\text{def} \ltimes_{\psi} R_\text{def})/ \ker \Phi$ is therefore a definable copy of $G$, as required.
\end{proof}

\begin{rem}\label{the right form}
Any connected Lie group satisfying the conditions of Theorem \ref{connected} will be isomorphic to a group
\[
G_{\text{def}}:=(H_\text{def} \ltimes_{\psi} R_\text{def})/ \{(x, \alpha(x)^{-1} ): x \in X \}\]

definable in $\R_\text{exp}$, such that the following hold:

\begin{enumerate}
\item The center $Z_{\text{def}}$ of $G_{\text{def}}$ is a linear algebraic abelian group.

\item $H_\text{def}$ is a semialgebraic group with underlying set $A\times \overline{S}$ where $A=\mathcal Z(H_\text{def}) = F \times \R^n \times \SO_2(\R)^k$ and $\overline{S}$ is a linear semisimple group isomorphic to $H_\text{def}/\mathcal Z(H_\text{def})$.

\item $R_\text{def}=K \ltimes N$ where $K$ is an matrix algebraic compact group and $N$ is a closed subgroup of upper triangular matrices,

\item $\alpha$ is a definable isomorphism between definable copies $(H \cap R)_\text{def}$ in $H_\text{def}$ and $R_\text{def}$.

\item $H_\text{def}$ is definably isomorphic to $\mathcal{Z}(G_\text{def})S$ where $S$ is a Levi subgroup of $G_\text{def}$  [$\mathcal{Z}(G_\text{def})S$ is definable by Corollary \ref{ZSdef}].

\item The solvable radical of $G_\text{def}$ is definably isomorphic to $R_\text{def}$.
\end{enumerate}
\end{rem}

\section{The general case}\label{sec:automorphisms}

In this section we will prove the following.

\begin{theo} \label{general}
Let $G$ be a real Lie group. Then $G$ has a definable copy if and only if $G^0$ has a definable copy and $G$ has finitely many connected components. This is equivalent to $G$ having a definable copy in $\mathbb{R}_{\exp}$.
\end{theo}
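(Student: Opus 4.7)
The plan is to deduce Theorem \ref{general} from Theorem \ref{connected}, the promised Theorem \ref{definable automorphisms} on definability of Lie automorphisms, and Lemma \ref{central-finite}. The equivalence will be proven as the cycle: (definable) $\Rightarrow$ ($G^{0}$ has a definable copy and $G/G^{0}$ is finite) $\Rightarrow$ (definable in $\R_{\exp}$) $\Rightarrow$ (definable).

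For the first implication, suppose $G$ is Lie-isomorphic to a group $G_{\text{def}}$ definable in some o-minimal expansion of the real field. Then $G_{\text{def}}^{0}$ is a definable subgroup, by Pillay's theorem it is a Lie subgroup of $G_{\text{def}}$, and via the Lie isomorphism it is identified with $G^{0}$; hence $G^{0}$ has a definable copy. Moreover $G/G^{0} \cong G_{\text{def}}/G_{\text{def}}^{0}$ is finite since any group definable in an o-minimal structure has only finitely many definably connected components.

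For the main implication, assume $G^{0}$ has a definable copy and $G/G^{0}$ is finite. By Theorem \ref{definable automorphisms}, applied to $G^{0}$, we may fix a Lie isomorphism $\iota \colon G^{0} \to G^{0}_{\text{def}}$ where $G^{0}_{\text{def}}$ is a group definable in $\R_{\exp}$ with the additional property that every abstract Lie automorphism of $G^{0}_{\text{def}}$ is definable in $\R_{\exp}$. Since $G^{0}$ is normal in $G$, choose coset representatives $\{g_{\sigma}\}_{\sigma \in G/G^{0}}$ of $G^{0}$ in $G$; conjugation by each $g_{\sigma}$ is a Lie automorphism of $G^{0}$, which we transport to a Lie automorphism
\[
f_{\sigma} = \iota \circ (h \mapsto g_{\sigma} h g_{\sigma}^{-1}) \circ \iota^{-1} \colon G^{0}_{\text{def}} \to G^{0}_{\text{def}}.
\]
By the defining property of $G^{0}_{\text{def}}$, each $f_{\sigma}$ is definable in $\R_{\exp}$. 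Now Lemma \ref{central-finite} applies with $H = G^{0}_{\text{def}}$, the finite set of representatives $\{g_{\sigma}\}$, and the definable family $\{f_{\sigma}\}$, producing a group definable in $\R_{\exp}$ that is group-isomorphic (and hence Lie-isomorphic, by the remark following Lemma \ref{central-finite}) to $G$. The final implication is trivial since $\R_{\exp}$ is an o-minimal expansion of the real field.

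The main obstacle is Theorem \ref{definable automorphisms}, the source of the definable copy $G^{0}_{\text{def}}$ on which arbitrary Lie automorphisms automatically become definable; this is what forces the conjugation maps by coset representatives to descend definably and what makes the finite extension step in Lemma \ref{central-finite} go through. Once that theorem is in hand, the assembly above is essentially formal, reducing the general case to the connected case treated in Theorem \ref{connected}.
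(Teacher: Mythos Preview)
Your proof is correct and follows exactly the route the paper takes: the paper states that Theorem~\ref{general} follows from Lemma~\ref{central-finite} together with Theorem~\ref{definable automorphisms}, and your cycle of implications spells this out precisely, including the transport of conjugation automorphisms via the isomorphism $\iota$ so that Lemma~\ref{central-finite} applies.
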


Using Lemma \ref{central-finite} the theorem will follow from the following, which is interesting in its own.

\begin{theo}\label{definable automorphisms}
Let $G$ be a connected Lie group whose center and solvable radical have a definable copy. Then $G$ has a definable copy $G_\text{def}$ such that every Lie automorphism of $G_\text{def}$ is definable in $\mathbb R_{\exp}$.
\end{theo}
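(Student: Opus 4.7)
The plan is to take $G_\text{def}$ to be the specific definable copy constructed in Theorem~\ref{connected} with the detailed structure recorded in Remark~\ref{the right form}, and to prove that every Lie automorphism $\phi$ of this particular $G_\text{def}$ is definable in $\mathbb{R}_{\exp}$. The proof will leverage the rigidity of $\phi$ on characteristic subgroups together with the fact that each ``building block'' of $G_\text{def}$ has automorphisms controlled by algebraic or $\mathbb{R}_{\exp}$-definable data. The overall strategy is to show that $\phi|_{R_\text{def}}$ and $\phi|_{H_\text{def}}$ are each definable, after which the graph of $\phi$ on $G_\text{def} = H_\text{def} R_\text{def}$ follows routinely.

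First I would reduce to the case where $\phi$ preserves the specific subgroups of interest. The automorphism $\phi$ automatically preserves $R_\text{def}$, $\mathcal{Z}(G_\text{def})$, and $N = \mathcal{N}(R_\text{def})$ (all characteristic). After composing with a suitable inner automorphism of $G_\text{def}$ (which is obviously definable), I may further assume that $\phi$ preserves the compact factor $K$ of $R_\text{def} = K \ltimes N$ and the chosen Levi subgroup $S$; in particular $\phi(H_\text{def}) = H_\text{def}$ (recall $H_\text{def}$ is definably isomorphic to $\mathcal{Z}(G_\text{def})S$ by Remark~\ref{the right form}).

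Next I would show $\phi|_{R_\text{def}}$ is definable in $\mathbb{R}_{\exp}$. On the simply connected triangular factor $N$, $\phi|_N$ corresponds to a Lie algebra automorphism of $\mathfrak{n}$ (a vector subspace of upper triangular matrices, so the correspondence is linear and hence semialgebraic) combined with the exponential map $\exp\colon\mathfrak{n}\to N$, which is definable in $\mathbb{R}_{\exp}$ precisely because $\mathfrak{n}$ is triangular (same reason as in the proof of Fact~\ref{Lie-done-solvable}). On the compact real algebraic factor $K$, every Lie automorphism is algebraic by classical structure theory. The key structural point, which makes this particular $G_\text{def}$ work, is the analysis of $\phi$ on $\mathcal{Z}(G_\text{def})^0 \cong \mathbb{R}^n \times \SO_2(\mathbb{R})^k$: the vector part sits inside $N$ and the toral part sits inside $K$, so since $\phi$ preserves both $N$ and $K$ it preserves this direct product decomposition and hence acts as a linear map on the $\mathbb{R}^n$-factor and as an element of $\GL_k(\mathbb{Z})$ on the torus. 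This rules out the non-semialgebraic ``exponential twist'' homomorphisms $\mathbb{R}^n \to \SO_2(\mathbb{R})^k$ that would otherwise obstruct definability, so $\phi$ is algebraic on $\mathcal{Z}(G_\text{def})^0$.

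Finally I would handle $\phi|_{H_\text{def}}$. The induced automorphism $\bar\phi$ on $\overline{S} = H_\text{def}/\mathcal{Z}(H_\text{def})$ is an automorphism of a centerless linear semisimple group, hence algebraic: inner automorphisms act as the adjoint representation on the linear group $\overline{S}$, and the finite outer automorphism group consists of algebraic diagram automorphisms on the chosen realization. Writing $H_\text{def}$ with underlying set $A \times \overline{S}$ and multiplication twisted by the semialgebraic cocycle $h_s$ of finite image (Theorem~\ref{centralextension}, Fact~\ref{HPPcocycle}), one has $\phi|_{H_\text{def}}(a,s)=(\phi_A(a)+f(s),\bar\phi(s))$ for a continuous $f\colon\overline{S}\to A$ satisfying a coboundary equation whose right-hand side is semialgebraic with finite image. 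Since $\overline{S}$ is semisimple and $A$ is abelian, $\text{Hom}(\overline{S}, A) = 0$, so $f$ is the unique continuous solution of this equation; I expect this uniqueness, combined with the definability of the right-hand side and of the algebraic data $(\phi_A,\bar\phi)$, to force $f$ to be definable in $\mathbb{R}_{\exp}$. The main obstacle is precisely this last step: identifying the unique continuous lift through the central extension with a definable candidate, using the finite image of $h_s$ and the rigidity supplied by $\text{Hom}(\overline{S}, A)=0$ to pin $f$ down from the semialgebraic data on $\overline{S}\times\overline{S}$. Once this is done, $\phi$ is definable on both $H_\text{def}$ and $R_\text{def}$, and its graph on $G_\text{def}$ is the definable set $\{(hr,\phi|_{H_\text{def}}(h)\phi|_{R_\text{def}}(r)) : h\in H_\text{def},\ r\in R_\text{def}\}$.
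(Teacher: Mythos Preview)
Your overall architecture matches the paper's proof exactly: split $\sigma$ into $\sigma|_{R_\text{def}}$ and $\sigma|_{ZS}$, prove each restriction is definable, and reassemble via $\sigma(hr)=\sigma|_{ZS}(h)\sigma|_{R}(r)$. The paper's Claims~\ref{moving R}, \ref{automorphism of H}, \ref{moving ZS} are precisely your three steps, and your coboundary analysis on $H_\text{def}$ (writing $\phi(a,s)=(\phi_A(a)+f(s),\bar\phi(s))$ and deriving the equation $f(xy)-f(x)-f(y)=$ definable) is exactly how the paper sets things up.

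The gap is the one you flag yourself: you do not yet have a mechanism to pass from ``$f$ is the unique continuous solution of a definable coboundary equation'' to ``$f$ is definable''. Uniqueness and rigidity alone do not give this; one needs a definable object whose existence forces the graph of $f$ to be definable. The paper's device is as follows. Let $K(x,y)=h_s(x,y)-\phi_A\bigl(h_s(\bar\phi^{-1}(x),\bar\phi^{-1}(y))\bigr)$, a definable $2$-cocycle with values in $A$, and form the \emph{definable} group $H_K$ on $A\times\overline S$ with operation $(a,x)\odot_K(b,y)=(a+b+K(x,y),xy)$. Since $K$ is a coboundary (via $f$), $H_K$ is Lie isomorphic to the direct product $A\times\overline S$, so it contains a subgroup $S_K=\{(f'(x),x):x\in\overline S\}$ isomorphic to $\overline S$; your observation $\mathrm{Hom}(\overline S,A)=0$ shows this subgroup is unique and that $f'=f$. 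Now $\overline S$ is perfect and definable, so $[\overline S,\overline S]_k=\overline S$ for some finite $k$ (Fact~5.3 in \cite{BJO}); since $H_K$ is a central extension of $S_K$ one gets $[H_K,H_K]_k=[S_K,S_K]_k=S_K$, exhibiting $S_K$---and hence the graph of $f$---as a definable set. This commutator-width trick is the missing idea.

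Two smaller points. First, your reduction ``compose with an inner automorphism so that $\phi$ preserves both $K$ and $S$'' asks for one conjugation to align two different conjugacy classes of subgroups simultaneously; the paper avoids this by treating $ZS$ with its own conjugation (Claim~\ref{moving ZS}) independently of the analysis on $R_\text{def}$, which is cleaner. Second, your argument that $\phi$ cannot introduce an ``exponential twist'' $\mathbb R^n\to\SO_2(\mathbb R)^k$ on $\mathcal Z(G_\text{def})^0$ depends on knowing that the $\mathbb R^n$-factor lies in $N$ and the torus factor lies in $K$; this is true but uses that $N=\mathcal N(R_\text{def})$ is characteristic (maximal normal torsion-free) rather than merely that $K$ is preserved after conjugation, so you should phrase it that way.
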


\begin{proof}
By Theorem \ref{connected}, $G$ has a definable copy $G_\text{def}$.
Let $G_\text{def}$ be as described in Remark \ref{the right form}, and let $\sigma$ be a Lie automorphism of $G_\text{def}$.

\begin{claim}\label{moving R}
The restriction $\sigma_R$ of $\sigma$ to $\mathcal{R}(G_\text{def})$ (see Remark \ref{the right form}) is definable in $\mathbb{R}_{\exp}$.
\end{claim}

\begin{proof}
By Remark \ref{the right form}, $\mathcal R(G_\text{def})$ is definably isomorphic to $R_\text{def}$ which is a semidirect product of a compact matrix algebraic subgroup $K$ and a subgroup $N$ which is a definable closed subgroup of an upper triangular matrix group. Because the exponential from $\mathfrak n$ to $N$ is a  diffeomorphism definable in $\mathbb R _{\exp}$, the action from $K$ to $N$ can be defined
from the action from $K$ to the Lie algebra $\mathfrak n$ (which will be a subspace of a matrix vector space) of $N$. Since $\mathfrak n$ is a definable vector space, the automorphism group of $\mathfrak n$ is linear, the graph of the action is a matrix group isomorphic to $K$, so it is compact and thus algebraic. The claim follows.
\end{proof}

\medskip

\begin{claim}\label{automorphism of H}
Any automorphism of $H_\text{def}$ is definable.
\end{claim}

\begin{proof}
Let $\sigma$ be any automorphism of $H_\text{def}$. By hypothesis, $H_\text{def}$ is a definable extension of the linear semisimple group $\overline S$ by $A=\mathcal Z(H_\text{def})=F \times \R^n \times \SO_2(\R)^k$, its universe is $A\times \overline S$ and the group operation given by a definable 2-cocycle $F$. We will assume that $F(x,e_{S})=F(e_{S},x)=e_A$. Let $\sigma_A$ is the restriction of $\sigma$ to $A$, $\tau$ is the automorphism of $H_\text{def}/A\cong \overline S$ induced by $\sigma$.

We will use additive notation for the group operation in $A$, multiplicative for $\overline S$, and $\odot_H$ for the group operation in $H_\text{def}$.

Notice that the restriction of $\sigma$ to $A$ is definable: $A^0=\R^n \times \SO_2(\R)^k$, $\R^n$ is the maximal torsion free subgroup and $\SO_2(\R)^k$ is the maximal compact, so the restriction of $\sigma$ to $A^0$ splits and it is the direct product of an element of $GL_n(\mathbb R)$ and an automorphism of a compact matrix algebraic group which must be algebraic. The map $\tau$ is an isomorphism of semisimple connected linear groups, so its graph is a semisimple connected matrix group and therefore definable (\ref{Lie-done-simple}).

\medskip

We now follow \cite{We71}. We will use some of the notation there so that the interested reader can check the computations.

Let $\gamma_1:\overline S\rightarrow A$ be defined by $\sigma(e_A,x)=(\gamma_1(x), \tau(x))$ for all $x$ (so that $\sigma(a,x)=(\sigma_A(x)+\gamma_1(x), \tau(x))$ for all $x\in \overline S$ and $a\in A$). Let $\gamma$ be such that $\gamma(\tau(x))=\gamma_1(x)$ so that $\sigma(a,x)=(\sigma_A(a)+\gamma(\tau(x)), \tau(x))$.

Applying $\sigma$ to the identity $(a,x)\odot_H (b,y)=(a+b+F(x,y),xy)$, we get that
\begin{equation}\label{K}
\gamma(xy)-\gamma(x)-\gamma(y)=F(x,y)-\sigma_A \left(F\left(\tau^{-1}\left(x\right), \tau^{-1}\left(y\right)\right)\right).
\end{equation}

Let $K(x,y)$ be the definable 2-cocycle $F(x,y)-\sigma_A \left(F\left(\tau^{-1}\left(x\right), \tau^{-1}\left(y\right)\right)\right)$.
$K(x,y)$ is a 2-cocycle in $C^2(\overline S, A)$ which by (\ref{K}), is a co-boundary (witnessed by $\gamma$), so that the group $H_K$ defined over $A\times \overline S$ by
\[
(a,x)\odot_K (b,y)=(a+b+K(x,y),xy)
\]
is isomorphic (as a Lie group) to the direct product $A\times \overline S$. Notice that because there are no non-trivial group morphisms from $\overline S$ to $A$, the group $S=\{(e_A,x)\colon x\in S_{\text{def}}\}$ is the unique subgroup of $A\times \overline S$ isomorphic to $\overline S$. So $H_K$ also admits a unique subgroup isomorphic to $\overline S$, let
\[
S_K:=\{(\gamma'(x),x)\}
\]
be such a subgroup. We will prove first that $\gamma=\gamma'$, and then that $S_K$ is definable, which will imply $\gamma'(x)$ is definable.

By defintion of $S_K$, $(\gamma'(x),x)\odot_K (\gamma'(y),y)=(\gamma'(xy),xy)$ which by definition of $\odot_K$ holds if and only if $\gamma'(xy)-\gamma'(x)-\gamma'(y)=K(x,y)$. It follows that $(\gamma-\gamma')(xy)=(\gamma-\gamma')(x)+(\gamma-\gamma')(y)$ and $(\gamma-\gamma')$ is a group homomorphism from $\overline S$ to $A$ which implies $(\gamma-\gamma')(x)=e_A$ and $\gamma=\gamma'$.

We will now show that $S_K$ is definable. Since $S_K$ is isomorphic to $\overline S$ which is perfect, and because $H_K$ is a central extension of $S_K$ we know that $S_K=[H_K, H_K]$ and $[H_K,H_K]_n=[S_K,S_K]_n$ for all $n\in\mathbb N$. Also, $\overline S$ is definable so  $[\overline S,\overline S]_k=\overline S$ for some finite $k$ (Fact 5.3 in \cite{BJO}). It follows that
\[[H_K,H_K]_k=[S_K,S_K]_k=S_K.\]

Hence $S_K$ is definable, and so is $\gamma$, $\gamma_1$ ($\tau$ is definable) and therefore $\sigma$.
\end{proof}

\medskip

\begin{claim}\label{moving ZS}
Let $Z=\mathcal Z(G_{\text{def}})$ and $S$ be a Levi subgroup $G_{\text{def}}$, so that $ZS$ is definably isomorphic to $H_{\text{def}}$ (it is the image of any section of $H_{\text{def}}$ in the quotient sending $H_\text{def} \ltimes_{\psi} R_\text{def}$ to $G_\text{def}$).

Then the restriction $\sigma_{ZS}$ of $\sigma$ to $\mathcal Z(G_\text{def})S$ is a definable map in $\mathbb{R}_{\exp}$ from $ZS$ to $G_\text{def}$.
\end{claim}

\begin{proof}
Any two Levi subgroups are conjugate so $\sigma(S)=\gamma_\sigma S \gamma_\sigma^{-1}$ for some $\gamma_\sigma\in G_\text{def}$.

Since $\sigma$ must fix the center $Z:=\mathcal{Z}(G_\text{def})$ we know that
\[\sigma(ZS)=Z\sigma (S)=Z\gamma_\sigma S \gamma_\sigma^{-1}=\gamma_\sigma (ZS) \gamma_\sigma^{-1},\]
and $\eta_\sigma:x\mapsto \gamma_\sigma^{-1}\sigma(x)\gamma_\sigma$ is an automorphism of $\text{Aut}(ZS)$. This induces (modulo the definable isomorphism between $ZS$ and $H_\text{def}$) an automorphism of $H_\text{def}$ which by Claim \ref{automorphism of H} is definable.

So $\eta_\sigma$ is definable and so is $\sigma$ (conjugation in a definable group is always definable).
\end{proof}

\medskip

We know that $G_\text{def}=\big( \mathcal{Z}\left(G_\text{def}\big)S\right)\mathcal{R}(G_\text{def})$. So for any element $g\in G$ we have $g=sr$ with $s\in \mathcal{Z}\left(G_\text{def}\right)S$ and $r\in \mathcal{R}(G_\text{def})$, and both $\mathcal{Z}\left(G_\text{def}\right)S$ and $\mathcal{R}(G_\text{def})$ are definable subgroups of $G_\text{def}$ by construction.

Therefore $\sigma(g) = \sigma(sr)=\sigma(s)\sigma(r)=\sigma_{ZS}(s)\sigma_{R}(r)$, and $\sigma$ is definable.
\end{proof}

\section{Further comments}
Theorem \ref{definable automorphisms} implies that every group $G$ satisfying the conditions of Theorem \ref{general} has a definable copy such that every automorphism of $G$ is definable. Let $G$ be a Lie group with finitely many connected components. The following fact is well-known, but we could not find it in the literature (except for a proof explained in  https://mathoverflow.net/questions/150949/)

\begin{fact}
Let $G$ be a Lie group with finitely many connected components. Then $G = F G^0$, for some finite $F < G$.
\end{fact}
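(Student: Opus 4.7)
The plan is to reduce to the compact case via Mostow's theorem on maximal compact subgroups, and then handle the compact case by finding a finite subgroup of the normalizer of a maximal torus that surjects onto the finite quotient $N_G(T)/T$.

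First, by Mostow's theorem, any Lie group $G$ with finitely many connected components admits a maximal compact subgroup $K$ satisfying $G = K G^0$ and $K \cap G^0 = K^0$, so that $K/K^0 \cong G/G^0$. It therefore suffices to find a finite subgroup $F_0 \leq K$ with $F_0 K^0 = K$, because then $F_0 G^0 = F_0 K^0 G^0 = K G^0 = G$. This reduces the problem to the case where $G$ itself is compact.

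Second, assume $G$ is compact and let $T$ be a maximal torus of $G^0$. Since $G^0$ is normal in $G$, conjugation by any $g \in G$ sends $T$ to another maximal torus of $G^0$, which is $G^0$-conjugate to $T$. Hence $G = G^0 \cdot N_G(T)$. Because $T$ is maximal in $G^0$, we have $N_G(T)^0 = T$, making $F := N_G(T)/T$ finite. It now suffices to find a finite subgroup $\widetilde F$ of $N_G(T)$ surjecting onto $F$: then $\widetilde F \cdot G^0 = \widetilde F \cdot T \cdot G^0 = N_G(T) \cdot G^0 = G$.

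Third, to produce $\widetilde F$, set $m = |F|$. Multiplication by $m$ on the torus $T$ is surjective with finite kernel $T[m]$, yielding a short exact sequence $0 \to T[m] \to T \xrightarrow{\cdot m} T \to 0$ of $F$-modules (with $F$ acting by conjugation). The associated long exact sequence in group cohomology contains
\begin{equation*}
H^2(F, T[m]) \longrightarrow H^2(F, T) \xrightarrow{\cdot m} H^2(F, T),
\end{equation*}
and since $|F| = m$ annihilates $H^2(F, T)$, the first arrow is surjective. Hence the extension class of $1 \to T \to N_G(T) \to F \to 1$ in $H^2(F, T)$ lifts to a class in $H^2(F, T[m])$, which corresponds to a finite extension $1 \to T[m] \to \widetilde F \to F \to 1$ together with a compatible morphism $\widetilde F \to N_G(T)$. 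The five-lemma forces this morphism to be injective, identifying $\widetilde F$ with a finite subgroup of $N_G(T)$ surjecting onto $F$, as required.

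The main obstacle is the cohomological lifting in the third step. Alternatively, one could argue concretely at the level of $2$-cocycles: choose a set-theoretic section $s \colon F \to N_G(T)$ and show that its associated cocycle $F \times F \to T$ is cohomologous to one with values in $T[m]$, paralleling the approach used in the proof of Theorem \ref{centralextension}.
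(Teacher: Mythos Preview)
The paper does not actually supply a proof of this fact: it only states that the result is well known and points to a MathOverflow thread. So there is no in-paper argument to compare against.

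Your proof is correct. Step~1 (Mostow reduction to the compact case) is standard; the equality $K\cap G^0=K^0$ follows because the diffeomorphism $G\cong K\times\R^n$ forces $\pi_0(K)\cong\pi_0(G)$, so the surjection $K/K^0\twoheadrightarrow G/G^0$ coming from $G=KG^0$ is a bijection. Step~2 is a clean Frattini argument using conjugacy of maximal tori in $G^0$; the identity $N_G(T)^0=T$ holds because $N_G(T)^0\subseteq N_{G^0}(T)^0=T$. Step~3 is the standard cohomological lift: multiplication by $m=|F|$ on $H^2(F,T)$ is zero, so the long exact sequence attached to $0\to T[m]\to T\xrightarrow{m}T\to 0$ shows the class of $N_G(T)$ comes from $H^2(F,T[m])$; concretely this means that after adjusting the section by a $T$-valued $1$-cochain, the cocycle lands in $T[m]$, and then $T[m]\cdot s'(F)$ is the desired finite subgroup.

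One small remark: the parallel you draw with the proof of Theorem~\ref{centralextension} is a bit loose, since that proof does not modify a cocycle to reduce its target but rather invokes Fact~\ref{HPPcocycle} to obtain a definable cocycle with finite image from a covering map. The spirit is similar (controlling a $2$-cocycle), but the mechanism is different.
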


By Lemma \ref{centralextension} we may assume that $G$ itself is a finite definable extension of some $G^0$ satisfying the conditions of Remark \ref{the right form}, so that by Theorem \ref{definable automorphisms} every Lie automorphism of $G^0$ is definable.

Now, $F$ and $\sigma\mid _F$ are both definable (they are finite) and for any $a\in F, x\in G^0$ and $\sigma\in Aut(G)$ we have
\[
\sigma(ax)=\sigma(a)\sigma(x)
\]
which is definable.

\bigskip

Notice also that in the proof of Claim \ref{automorphism of H} we show in fact that if $H$ is a central extension of a definable semisimple group $S$ by an abelian semialgebraic group $A$, then any automorphism of $H$ is semialgebraic.

\bigskip

In our groups and maps definable in  $\mathbb{R}_{\exp}$, the exponential function is needed for definability of automorphisms of upper triangular groups. However, for nilpotent upper triangular Lie algebras the exponential map is polynomial, and therefore semialgebraic. It seems that this is the only use of the exponential function in our analysis, so that if we assume that a group $G$ has finitely many connected components, its center has finitely many connected components and the solvable radical is nilpotent by compact, then it has a definable copy which is semialgebraic. In other words, if $G$ has a definable copy $G_\text{def}$ and $\mathcal{N}(G_\text{def})$ is nilpotent,  then we can find a semialgebraic copy.

\vspace{.2cm}

\end{document}